\newenvironment{proof}{{\em Proof.} }{\hfill$\Box$\vspace{0.1in}}
\newtheorem{proposition}{Proposition}[section]
\newtheorem{lemma}{Lemma}[section]
\newtheorem{theorem}{Theorem}[section]
\numberwithin{equation}{section}
\newcommand{\Z}{{I\!\! Z}}
\newcommand{\N}{{I\!\! N}}
\newcommand{\R}{\mathbb{ R}}
\newcommand{\C}{\mbox{\rm C\hspace{-0,4em}\rule{0,05ex}{1,55ex}\hspace{0,1em}}}
\title{On the well-posedness   of   the Cauchy problem for the generalized Korteweg-de Vries-Burgers
equation \thanks{Supported by NSFC (10571158) and Zhejiang
Provincial NSF of China (Y605076)}}
\author{ Ruying  Xue\\ {\small Department of  Mathematics, Zhejiang University,}\\
{\small Hangzhou 310027, Zhejiang, P.  R.  China }
\\{\small e-mail address: ryxue@zju.edu.cn} }
\date{}
\begin{document}
\maketitle
\pagestyle{plain}
\begin{quote}\small \bf Abstract\quad \rm Considered is  the generalized Korteweg-de  Vries-Burgers equation
$$ u_{t}+u_{xxx}+uu_{x}+|D_{x}|^{2\alpha }u=0,\quad t\in \mathbb{R}^{+},\,
x\in \mathbb{R}, $$
 with $0\leq \alpha\le 1$. We prove  a  sharp  results on the associated Cauchy problem  in the
 Sobolev  space $ {H}^s(\mathbb{R})$.  For $s>-\min\{\frac {3+2\alpha}4, 1\}$ we give the well-posedness
 of solutions of  the Cauchy problem,
while for $\frac 12\le\alpha\le 1$ and for  $s<-\min\{\frac
{3+2\alpha}4, 1\}$ we show some ill-posedness issues.

\noindent {\bf Key words}\quad {\rm  Korteweg-de Vries-Burgers
equation; well-posedness; existence.}

\noindent\bf 2000 MR Subject Classification\quad {\rm 35Q53,
35Q60} \end{quote}\normalsize

\setcounter{section}{0}
\section{Introduction and statement of the result}

In this paper we consider the cauchy problem associated with the
generalized Korteweg-de Vries-Burgers equation
\begin{equation}\left\{\begin{array}{l}u_{t}+u_{xxx}+uu_{x}+|D_{x}|^{2\alpha }u=0,\quad t\in \R^{+},\, x\in \R\\
u(0)=\varphi (x),\end{array}\right.\label{1}\end{equation} where,
$0\le\alpha\le 1$,  $|D_{x}|^{2\alpha }$ is   the Fourier
multiplier associated with  the symbol $|\xi|^{2\alpha }$.

Equation (\ref{1}) has been derived as a model for the propagation
of weakly nonlinear dispersive long waves in some physical
contexts when dissipative effects occur (see \cite{Ott}). The long
time asymptotic behavior of its solutions has been studied in
numerous papers (see \cite{Bona} and references therein ).

When $\alpha=0$,  (\ref{1}) is the Korteweg-de Vries equation. The
best known results on the Cauchy problem  for the Korteweg-de
Vries equation have been derived by Kenig, Ponce and Vega (see
\cite{Kenig1}, \cite{Kenig2}). They proved that the Cauchy problem
for the KdV equation is locally well-posed in $H^s(\R)$ for
$s>-\frac 34$, and that the flow-map for the KdV equation is  not
locally uniformly continuous in $H^s(\R)$ for $s<-\frac 34$. For
the Cauchy problem of  the dissipative Burgers equation
$$u_t-u_{xx}+uu_x=0,$$
it is known that the local well-posedness in $H^s(\R)$ holds for
$s\ge -\frac 12$ (see \cite{Bek}), and  some non-uniqueness
phenomena occur for $s<-\frac 12$( see \cite{Dix}). When
$\alpha=1$, (\ref{1}) is the Korteweg-de Vries-Burgers equation.
 Molinet and Ribaud in \cite{Monlinet} proved that the Korteweg-de Vries-Burgers
equation is globally well-posed in $H^s(\R)$ for $s>-1$ and
ill-posed in $H^s(\R)$ for $s<-1$. They  proved that the Cauchy
problem (\ref{1}) associated  with $0\le \alpha\le 1$ is ill-posed
in the homogenous Sobolev space $ \dot{H}^s(\R)$ for $s<\frac
{\alpha-3}{2(2-\alpha)}$, and conjectured that the well-posedness
in ${H}^s(\R)$ for $s>\frac {\alpha-3}{2(2-\alpha)}$ could be
proved. The aim of this paper is to answer this open problem. We
prove that (\ref{1})  is well-posed in  the Sobolev space
${H}^s(\R)$ for $s>-\min\{\frac {3+2\alpha}4, 1\}$. Note that
$-\min\{\frac {3+2\alpha}4, 1\}< \frac {\alpha-3}{2(2-\alpha)}$
for $0<\alpha<1$.

 Let $<\cdot>=(1+|\cdot|
^{2})^{\frac{1}{2}}$. We define
$$ {X}_{\alpha }^{b,s}=\{u\in {\cal S}'(\R^2):
\, \parallel u\parallel_{ {X}_{\alpha}^{b,s}}<+\infty\},$$
$$ {X}_{\alpha,T }^{b,s}=\{u: \, \exists v\in  {X}_{\alpha
}^{b,s}\, \mbox{ satisfying }\, u=v \, \mbox{in}\, \R\times [0,
T]\},$$ with
$$\parallel u\parallel _{ {X}_{\alpha}^{b,s}}
=\|<i(\tau -\xi^{3})+|\xi|^{2\alpha }>^{b}<\xi>^{s}\hat
u(\xi,\tau)\|_{L^2(\R^2)},$$
$$\parallel u\parallel _{ {X}_{\alpha,T}^{b,s}}
=\inf\{\parallel v\parallel _{ {X}_{\alpha}^{b,s}}: \, v\in
 {X}_{\alpha }^{b,s}\, \mbox{satisfying }\, u=v\,  \mbox{in }
\,  \R\times [0, T]\,\}.$$ Let $ {H}^{s}(\R)$  be the usual
 Sobolev space.  Our main result is
\begin{theorem}\label{T1.1} Let $\varphi \in  {H}^{s}(\R)$ with
$s>-\min\{\frac {3+2\alpha}4, 1\}$. For any  $T>0$, there exists a
unique solution $u$ of (\ref{1}) satisfying
$$u\in
Z_{T}=C([0,T], {H}^{s}(\R))\cap  {X}_{\alpha
,T}^{\frac{1}{2},s}.$$
 Moreover the map $\varphi \mapsto u$ is smooth from
$ {H}^{s}(\R)$ to $Z_{T}$ and $u$ belongs to $C((0, +\infty),
H^\infty (\R))$.
\end{theorem}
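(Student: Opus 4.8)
The plan is to recast (\ref{1}) as the integral (Duhamel) equation and solve it by a contraction argument in the Bourgain-type space $X_{\alpha}^{1/2,s}$ whose norm is tailored to the combined dispersive--dissipative symbol $i(\tau-\xi^3)+|\xi|^{2\alpha}$. Writing the linear propagator of $\p_t+\p_x^3+|D_x|^{2\alpha}$ as the semigroup $W(t)$ with Fourier symbol $e^{it\xi^3-t|\xi|^{2\alpha}}$, and using $uu_x=\tfrac12\p_x(u^2)$, a solution on $[0,T]$ satisfies
$$u(t)=\psi(t)\,W(t)\varphi-\tfrac12\,\psi_T(t)\int_0^t W(t-t')\,\p_x\big(u^2\big)(t')\,dt',$$
where $\psi,\psi_T$ are suitable time cut-offs used to localize on $[0,T]$. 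First I would establish the two linear estimates: the homogeneous bound $\|\psi(t)W(t)\varphi\|_{X_\alpha^{1/2,s}}\lesssim\|\varphi\|_{H^s}$, and the inhomogeneous (retarded) bound showing that the Duhamel operator sends $X_\alpha^{-1/2,s}$ into $X_\alpha^{1/2,s}\cap C([0,T],H^s)$. These follow from the definition of the norm together with standard manipulations of the resolvent weight $\langle i(\tau-\xi^3)+|\xi|^{2\alpha}\rangle$; since the dissipative real part $|\xi|^{2\alpha}$ makes the propagator a contraction rather than unitary, it only helps here.

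The heart of the proof --- and the step I expect to be the main obstacle --- is the bilinear estimate
$$\big\|\p_x(uv)\big\|_{X_\alpha^{-1/2,s}}\lesssim\|u\|_{X_\alpha^{1/2,s}}\,\|v\|_{X_\alpha^{1/2,s}},$$
which by Plancherel and duality reduces to bounding a weighted convolution integral over the frequencies $\xi=\xi_1+\xi_2$, $\tau=\tau_1+\tau_2$. The mechanism is the KdV resonance identity: since the imaginary parts of the three weights satisfy $(\tau-\xi^3)-(\tau_1-\xi_1^3)-(\tau_2-\xi_2^3)=3\xi\xi_1\xi_2$, in the non-resonant regime at least one of the modulations $\langle i(\tau-\xi^3)+|\xi|^{2\alpha}\rangle$ or $\langle i(\tau_j-\xi_j^3)+|\xi_j|^{2\alpha}\rangle$ is $\gtrsim|\xi\,\xi_1\xi_2|$, which supplies the derivative gain needed to beat the $\p_x$; in the resonant regime where the frequencies are comparable or one is small, the dissipative contribution $|\xi|^{2\alpha}-|\xi_1|^{2\alpha}-|\xi_2|^{2\alpha}$ recovers the missing smoothing. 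Carrying this through by a dyadic case analysis (high--high $\to$ low, high--low, and low-frequency output) and optimizing the interplay between the dispersive factor $|\xi\xi_1\xi_2|$ and the dissipative factor $|\xi|^{2\alpha}$ is precisely what forces the threshold $s>-\min\{\tfrac{3+2\alpha}{4},1\}$: the exponent $\tfrac{3+2\alpha}4$ is dictated by the high--high interaction balanced against the dispersive resonance, while the competing value $1$ comes from the regime in which the dissipation saturates (so that for $\alpha\ge\tfrac12$ the bound is the Korteweg--de Vries--Burgers value $-1$). I expect the low--high and nonlinear low-output regions to require the most delicate bookkeeping.

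With the linear and bilinear estimates in hand, the map defined by the right-hand side of the Duhamel equation is a contraction on a ball of $X_{\alpha,T}^{1/2,s}$ for $T$ small (and, after the a priori control below, for all $T$), yielding a unique fixed point $u\in Z_T$. Because this map is quadratic (hence real-analytic) in $u$ and affine in $\varphi$, the implicit-function/Neumann-series construction shows the data-to-solution map $\varphi\mapsto u$ is smooth. Finally, to upgrade to $u\in C((0,+\infty),H^\infty(\R))$ and to the global-in-time conclusion I would exploit the parabolic smoothing of the dissipation: the factor $e^{-t|\xi|^{2\alpha}}$ in $W(t)$ lets one rerun the same fixed-point scheme with $s$ replaced by $s+\epsilon$ on each subinterval $[\delta,T]$, bootstrapping the regularity of $u(t)$ for every $t>0$ up to $H^\infty$, while the resulting smooth-solution energy identity --- in which $|D_x|^{2\alpha}u$ produces a favorable sign --- furnishes the a priori $H^s$ bound that precludes finite-time blow-up and hence extends the solution to arbitrary $T>0$.
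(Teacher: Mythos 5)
Your plan follows the same route as the paper: Duhamel formula, the spaces $X_{\alpha}^{b,s}$ weighted by $<i(\tau-\xi^3)+|\xi|^{2\alpha}>$, linear estimates for $\psi(t)W(t)\varphi$ and for the retarded term, a dyadic bilinear estimate organized around the resonance identity $h(\xi)=3i\xi_1\xi_2\xi_3-(|\xi_1|^{2\alpha}+|\xi_2|^{2\alpha}+|\xi_3|^{2\alpha})$, a fixed point, smoothness of the flow map by analyticity of the quadratic map, and parabolic smoothing plus the dissipative $L^2$ bound for the assertions at positive times and for globality. So the architecture is the paper's. But as written your scheme has a genuine gap at the endpoint exponent $b=-\frac12$, and it sits exactly where the paper has to work hardest.

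Concretely, you assert that the Duhamel operator maps $X_{\alpha}^{-1/2,s}$ into $X_{\alpha}^{1/2,s}\cap C([0,T],H^{s})$ and that the bilinear estimate $\|\p_x(uv)\|_{X_{\alpha}^{-1/2,s}}\lesssim\|u\|_{X_{\alpha}^{1/2,s}}\|v\|_{X_{\alpha}^{1/2,s}}$ then closes a contraction for small $T$. Neither step survives at this endpoint. The paper's Proposition \ref{2.3} [a] shows that the $X_{\alpha}^{1/2,s}$ bound for the Duhamel term carries an unavoidable extra term involving $\int_{\R}|\hat v(\tau)|<i\tau+|\xi|^{2\alpha}>^{-1}d\tau$, which is not controlled by $\|v\|_{X_{\alpha}^{-1/2,s}}$ (the Cauchy--Schwarz step in $\tau$ needs $\delta>0$, since $\int_{\R}<i\tau+|\xi|^{2\alpha}>^{-1-2\delta}d\tau$ diverges at $\delta=0$); the clean bound, Proposition \ref{2.3} [b], requires input in $X_{\alpha}^{-1/2+\delta,s}$, and the same gain $\delta>0$ is what yields continuity in time (Proposition \ref{L2.4}) --- recall that the endpoint space $X_{\alpha}^{1/2,s}$ does not embed into $C_tH^s$ in general, which is precisely why $Z_T$ is defined as an intersection. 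Moreover, your bilinear estimate carries no small parameter: the free part $\psi(t)W(t)\varphi$ has $X_{\alpha}^{1/2,s}$ norm comparable to $\|\varphi\|_{H^s}$ uniformly in $T$, so a quadratic estimate with an absolute constant cannot produce a contraction for large data no matter how small $T$ is. The paper's Theorem \ref{T3.1} therefore proves the stronger statement $\|\p_x(uv)\|_{X_{\alpha}^{-1/2+\delta,s}}\lesssim T^{\mu}\|u\|_{X_{\alpha}^{1/2,s}}\|v\|_{X_{\alpha}^{1/2,s}}$ for $u,v$ supported in $[-T,T]$, and extracting the gain $\delta$ is what forces Lemma \ref{L3.2} to be proved with the weight $<\lambda_3>^{\frac12-\delta}$ and forces every dyadic sum there to absorb an extra loss $L_{max}^{\delta}$ --- a loss your $\delta=0$ bookkeeping never has to confront. (A smaller point: the paper proves the bilinear estimate first only for $s\in(-\min\{\frac{3+2\alpha}{4},1\},-\frac12)$ and then upgrades to all $s\ge s_c$ via the triangle inequality, Theorem \ref{T3.2}; this two-tier structure, not a single estimate for all $s$, is what runs the fixed point throughout the stated range.)
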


{\bf Remark }\quad  For $s<\frac{\alpha -3}{2(2-\alpha )}$,
Molinet and Ribaud (see Remark 1 and Theorem 2 in \cite{Monlinet}
) proved that the flow-map
$$\varphi\mapsto u(t), \, t\in [0,T]$$
is not $C^2$ differentiable at zero from the homogenous Sobolev
$\dot{H}^s(\R)$ to $C([0, T]; \dot{H}^s(\R))$.

The result is optimal in the case $\frac 12\le\alpha\le 1$.
\begin{theorem}\label{T2} Let $\frac 12\le\alpha\le 1$ and $s<-1$.
Then there does not exists  $T>0$ such that the Cauchy problem
(\ref{1}) has a unique local solution $u$ defined on the interval
$[0, T]$, and such that the flow-map
$$\varphi\to u(t), t\in [0,T]$$
is $C^2$ differentiable at zero from $ H^s(\R)$ to  $C([0, T],
H^s(\R))$. \end{theorem}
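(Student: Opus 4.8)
The plan is to argue by contradiction through the standard second-order (bilinear) obstruction of Bourgain and Molinet--Ribaud type. Suppose that for some $T>0$ there is a unique local solution whose flow-map $F:\varphi\mapsto u$ is $C^2$ at the origin from $H^s(\R)$ to $C([0,T],H^s(\R))$. By uniqueness the zero datum yields the zero solution, so $F(0)=0$ and its second Fréchet derivative $D^2F(0)$ is a bounded symmetric bilinear map; in particular $\|D^2F(0)(\phi,\phi)\|_{C([0,T],H^s)}\le C\|\phi\|_{H^s}^2$ for all $\phi\in H^s(\R)$. I would then identify this derivative explicitly. Writing $W_\alpha(t)$ for the linear propagator with symbol $e^{it\xi^3-t|\xi|^{2\alpha}}$ and inserting $\varphi=\gamma\phi$ into the Duhamel formula $u(t)=W_\alpha(t)\varphi-\frac12\int_0^t W_\alpha(t-t')\p_x(u^2)(t')\,dt'$, differentiating twice in $\gamma$ at $\gamma=0$ and using $u|_{\gamma=0}=0$ gives $\p_\gamma^2 u|_{\gamma=0}=2u_2$ with $u_2(t)=-\frac12\int_0^t W_\alpha(t-t')\p_x\big((W_\alpha(\cdot)\phi)^2\big)(t')\,dt'$. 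Hence the $C^2$ hypothesis forces the bilinear bound $\sup_{t\in[0,T]}\|u_2(t)\|_{H^s}\le C\|\phi\|_{H^s}^2$, and it suffices to exhibit a family $\phi=\phi_N$ violating it.

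Next I would compute $\widehat{u_2}$ explicitly. Carrying out the $t'$-integration yields
\[
\widehat{u_2}(\xi,t)=-\frac{i\xi}{2}\,e^{t\omega(\xi)}\int_{\R}\frac{e^{t\Omega(\xi,\xi_1)}-1}{\Omega(\xi,\xi_1)}\,\widehat\phi(\xi_1)\,\widehat\phi(\xi-\xi_1)\,d\xi_1,
\]
where $\omega(\xi)=i\xi^3-|\xi|^{2\alpha}$ and $\Omega(\xi,\xi_1)=\omega(\xi_1)+\omega(\xi-\xi_1)-\omega(\xi)$, so that $\mathrm{Im}\,\Omega=-3\xi\xi_1(\xi-\xi_1)$ and $\mathrm{Re}\,\Omega=-(|\xi_1|^{2\alpha}+|\xi-\xi_1|^{2\alpha}-|\xi|^{2\alpha})$. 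I then choose the real even datum $\widehat{\phi_N}=N^{-s}\big(\chi_{[N,N+1]}+\chi_{[-N-1,-N]}\big)$, for which $\|\phi_N\|_{H^s}\approx\sqrt2$ is bounded. The decisive interaction is the high-high-to-low one: for output frequency $\xi\in[\tfrac14,\tfrac12]$ the only contributing region is $\xi_1\approx\pm N$, $\xi-\xi_1\approx\mp N$, where $\mathrm{Re}\,\Omega\approx-2N^{2\alpha}\to-\infty$. Thus $e^{t\Omega}$ is exponentially small and $\frac{e^{t\Omega}-1}{\Omega}\approx-\Omega^{-1}$, whose real and imaginary parts are both positive, of sizes $N^{2\alpha}/|\Omega|^2$ and $\xi N^2/|\Omega|^2$; since $\alpha\le1$ one has $|\Omega|^2=4N^{4\alpha}+9\xi^2N^4\lesssim N^4$. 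Because the integrand keeps a fixed sign (here $\widehat{\phi_N}\ge0$ and $\mathrm{Re}(-\Omega^{-1}),\,\mathrm{Im}(-\Omega^{-1})>0$) there is no cancellation in the $\xi_1$-integral, and using $|e^{t\omega(\xi)}|=e^{-t|\xi|^{2\alpha}}\gtrsim1$ on $\xi\in[\tfrac14,\tfrac12]$ together with an overlap length $\gtrsim1$ I obtain $|\widehat{u_2}(\xi,t)|\gtrsim N^{-2s-2}$ on this band. Consequently $\|u_2(t)\|_{H^s}\gtrsim N^{-2s-2}$ for each fixed $t\in(0,T]$, and for $s<-1$ the exponent $-2s-2$ is positive, so $\|u_2(t)\|_{H^s}/\|\phi_N\|_{H^s}^2\to\infty$, contradicting the bilinear bound.

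I expect the main obstacle to be the clean lower bound on $\|u_2(t)\|_{H^s}$. One must verify that the dissipative factor $\mathrm{Re}\,\Omega\approx-2N^{2\alpha}$ indeed reduces the resonance kernel to $-\Omega^{-1}$ uniformly for large $N$ (killing the oscillatory factor $e^{it\,\mathrm{Im}\,\Omega}$), that the two cross-terms $\xi_1\approx N$ and $\xi_1\approx-N$ add constructively rather than cancel, and that pinning the output to the fixed band $\xi\in[\tfrac14,\tfrac12]$ simultaneously neutralizes both the output dissipation $e^{-t|\xi|^{2\alpha}}$ and the derivative loss from $\p_x$, while keeping the power count $N^{-2s-2}$ uniform in $\alpha\in[\tfrac12,1]$. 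Once these points are secured the remaining steps are routine bookkeeping.
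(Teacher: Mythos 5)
Your proposal is correct and follows essentially the same route as the paper: reduce $C^2$ differentiability at zero (via uniqueness and Duhamel) to the bilinear bound $\sup_{t\in[0,T]}\|\int_0^t W(t-t')\partial_x[(W(t')\phi)^2]dt'\|_{H^s}\lesssim\|\phi\|_{H^s}^2$, then refute it with $\hat\phi_N=N^{-s}(\chi_{I_N}+\chi_{-I_N})$, exploiting the high-high-to-low interaction where the dissipative factor $e^{t\,\mathrm{Re}\,\Omega}\approx e^{-2tN^{2\alpha}}$ kills the oscillation and sign-definiteness of the kernel $-\Omega^{-1}$ yields $\|u_2(t)\|_{H^s}\gtrsim N^{-2s-2}\to\infty$ for $s<-1$. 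The only (immaterial) differences are cosmetic: you restrict the output frequency to $[\tfrac14,\tfrac12]$ instead of $[-\tfrac12,\tfrac12]$, and you make explicit the no-cancellation point that the paper leaves implicit in its real-part inequality.
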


 In this paper, we use $A\lesssim B$ to denote
the statement that $A\le CB$ for some large constant C which may
vary from line to line, and similarly use $A\ll B$ to denote the
statement $A\le C^{-1}B$. We use $A\sim B$ to denote the statement
that $A\lesssim B\lesssim A$. Any summations over capitalized
variables such as $N_j$, $L_j$ , $H$ are presumed to be dyadic,
i.e. these variables range over numbers of the form $2^k$ for
$k\in \Z$ or for $k\in \N$ . In addition to the usual notation
$\chi_E$ for characteristic functions, we define $\chi_P$ for
statements $P$ to be $1$ if $P$ is true and 0 otherwise, e.g.
$\chi_{1\le |\xi|\le 2}$.

We adopt the following summation conventions. Any summation of the
form $L_{max}\sim\cdot$ is a sum over the three dyadic variables
$L_1, L_2, L_3\gtrsim 1$, thus for instance
$$\sum_{L_{max}\sim H}
:= \sum_{L_1,L_2,L_3\gtrsim 1; L_{max}\sim H}.$$
 Similarly, any
summation of the form $N_{max}\sim\cdot$ sum over the three dyadic
variables $N_1, N_2, N_3> 0$, thus for instance $$\sum_{
N_{max}\sim N_{med}\sim N} :=\sum_{N_1 ,N_2, N_3>0; N_{max}\sim
N_{med}\sim N}.$$

The rest of this paper is organized as follows. In section 2 we
give some linear estimates. In section 3 we prove the crucial
bilinear estimates and give the proof of Theorem \ref{T1.1}. The
ill-posedness is given in section 4.

\section{Linear estimates}

Let $U(\cdot )$ be the free evolution of the KdV equation defined
by $U(t)=e^{itP(D_{x})}$, where $P(D_{x})$ is the Fourier
multiplier with the symbol $P(\xi )=\xi ^{3}$. Obviously $U(\cdot
)$ is a unitary group in $ {H}^{s}(\R), s\in \R$.  Since
$\mathcal{F}(U(-t)u)(\tau ,\xi )=\mathcal{F}(u)(\tau +\xi ^{3},\xi
)$,  one can rewrite the norm of $ {X}_{\alpha }^{b,s}$ as
$$\parallel u\parallel _{ {X}_{\alpha }^{b,s}}
=\parallel <i\tau +|\xi|^{2\alpha }>^{b}<\xi
>^{s}\mathcal{F}(U(-t)u)(\tau ,\xi )\parallel _{L^{2}(\R^{2})}.$$
Let $W(\cdot )$  be the semigroup associated with the free
evolution of (\ref{1}) defined by
\[ \mathcal{F}_{x}(W(t)\varphi )(\xi )=e^{it\xi ^{3}-t\mid
\xi \mid ^{2\alpha }}\hat{\varphi }(\xi ),\quad \varphi \in
\mathcal{S}'(\R),\, t\ge 0,\] and we extend $W(\cdot )$ to a
linear operator defined on the whole real axis by setting
\[ \mathcal{F}_{x}(W(t)\varphi )(\xi )=e^{it\xi ^{3}-|t| |\xi|^{2\alpha }}\hat{\varphi }(\xi ),\quad \varphi \in
\mathcal{S}'(\R),\, t\in\R.\]
 Let $\psi $ be a time cut-off function defined by
\[\psi \in C_{0}^{\infty }(\R),\, supp \psi \subset [-2,2],\, \psi \equiv 1 \, \mbox{ on } \,
[-1,1]\] and let $\psi _{T}(\cdot )=\psi (\cdot /T)$ for a given
$T>0$.

\begin{proposition}\label{L2.1} For $s\in \R$,  we have
\[\parallel \psi (t)W(t)\varphi \parallel_{ {X}_{\alpha
}^{\frac{1}{2},s}}\lesssim \parallel \varphi \parallel _{
{H}^{s}},\quad \forall \varphi \in
 {H}^{s}(\R).\]\end{proposition}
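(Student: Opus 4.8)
The plan is to reduce the estimate to a one–dimensional bound at fixed spatial frequency and then exploit the scale invariance of the $\dot H^{1/2}$ seminorm in time. First I would compute the space–time Fourier transform. Writing $a=a(\xi)=|\xi|^{2\alpha}\ge 0$ and using that $\mathcal F_{x}(U(-t)\psi(t)W(t)\varphi)(\xi)=\psi(t)e^{-|t|a}\hat\varphi(\xi)$, one obtains
\[
\mathcal F\big(U(-t)\psi(t)W(t)\varphi\big)(\tau,\xi)=\hat\varphi(\xi)\,G_{a}(\tau),\qquad
G_{a}(\tau)=\int_{\R}\psi(t)\,e^{-a|t|}\,e^{-it\tau}\,dt .
\]
By the reformulation of the $X_{\alpha}^{b,s}$–norm in terms of $U(-t)$ and Fubini, the proposition is equivalent to the frequency–uniform bound
\[
\sup_{a\ge 0}\int_{\R}<i\tau+a>\,|G_{a}(\tau)|^{2}\,d\tau
=\sup_{a\ge 0}\int_{\R}(1+\tau^{2}+a^{2})^{1/2}\,|G_{a}(\tau)|^{2}\,d\tau\lesssim 1,
\]
after which multiplying by $<\xi>^{2s}|\hat\varphi(\xi)|^{2}$ and integrating in $\xi$ finishes the argument. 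It is essential that $b=\tfrac12$: squaring the weight produces only one power of $(1+\tau^{2}+a^{2})^{1/2}$, i.e. a single half–derivative in time.

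For the fixed–$a$ bound I would split $(1+\tau^{2}+a^{2})^{1/2}\le <\tau>+a$ and treat the two pieces separately, writing $g_{a}(t)=\psi(t)e^{-a|t|}$. By Plancherel the two resulting integrals satisfy $\int_{\R}<\tau>|G_{a}|^{2}\,d\tau\sim\|g_{a}\|_{H^{1/2}_{t}}^{2}$ and $a\int_{\R}|G_{a}|^{2}\,d\tau\sim a\,\|g_{a}\|_{L^{2}_{t}}^{2}$. For the latter, $\|g_{a}\|_{L^{2}_{t}}^{2}\le\int_{-2}^{2}e^{-2a|t|}\,dt\lesssim\min(1,a^{-1})$, so $a\|g_{a}\|_{L^{2}_{t}}^{2}\lesssim 1$ for every $a\ge 0$; the same estimate controls the $L^{2}$ part of $\|g_{a}\|_{H^{1/2}_{t}}$.

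The crux is the uniform control of the homogeneous seminorm $\|g_{a}\|_{\dot H^{1/2}_{t}}$. Here I would use the product inequality for the Gagliardo seminorm, which follows directly from $|(fg)(t)-(fg)(t')|\le |f(t)|\,|g(t)-g(t')|+|g(t')|\,|f(t)-f(t')|$, namely
\[
\|fg\|_{\dot H^{1/2}}\lesssim\|f\|_{L^{\infty}}\|g\|_{\dot H^{1/2}}+\|g\|_{L^{\infty}}\|f\|_{\dot H^{1/2}},
\]
applied with $f=\psi$ and $g(t)=e^{-a|t|}$. Then $\|\psi\|_{\dot H^{1/2}}$ is a fixed constant and $\|e^{-a|\cdot|}\|_{L^{\infty}}=1$, while the change of variables $t\mapsto t/a$ shows that the one–dimensional seminorm $\|e^{-a|\cdot|}\|_{\dot H^{1/2}}$ is \emph{independent of $a$}, since $\dot H^{1/2}(\R)$ is dilation invariant. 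Hence $\|g_{a}\|_{\dot H^{1/2}_{t}}\lesssim 1$ uniformly, completing the fixed–$a$ bound.

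The main obstacle is exactly this last step, and it is genuinely delicate: $e^{-a|t|}$ has a corner at $t=0$, and the weight $<i\tau+a>^{1/2}$ asks for precisely a half time–derivative there. Had one used $b=1$, the analogous computation would give $\int(1+\tau^{2}+a^{2})|G_{a}|^{2}\,d\tau\sim a\to\infty$ as $a\to\infty$, so the estimate would fail; with $b=\tfrac12$ the corner's $\dot H^{1/2}$ content is scale invariant and therefore contributes a bounded amount uniformly in the dissipation strength $a=|\xi|^{2\alpha}$. This scale invariance is what allows the parabolic semigroup $W(t)$ to be placed in the very $X^{1/2,s}_{\alpha}$ space adapted to the free KdV group.
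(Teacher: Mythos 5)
Your proof is correct, and it shares the paper's overall skeleton: conjugating by $U(-t)$ to reduce everything to the one-dimensional functions $g_a(t)=\psi(t)e^{-a|t|}$ with $a=|\xi|^{2\alpha}$, splitting the weight $<i\tau+a>^{1/2}\lesssim <\tau>^{1/2}+a^{1/2}$, and then proving the two fixed-frequency bounds $\|g_a\|_{H^{1/2}_t}\lesssim 1$ and $a\|g_a\|_{L^2_t}^2\lesssim 1$ uniformly in $a$. Where you genuinely differ is in the mechanism for the crucial first bound. The paper uses a Fourier-side product estimate, $\|g_\xi\|_{H^b_t}\le \|<\tau>^b\hat\psi\|_{L^1}\|e^{-a|t|}\|_{L^2}+\|\hat\psi\|_{L^1}\|e^{-a|t|}\|_{H^b_t}$ with $b\in\{0,\tfrac12\}$, and is then forced into a two-regime analysis: for $|\xi|\ge 1$ it invokes the scaling relation $\|e^{-a|t|}\|_{H^b_t}\sim a^{b-\frac12}\|e^{-|t|}\|_{H^b_t}$, while for $|\xi|\le 1$ that inhomogeneous scaling fails (the $L^2$ norm of $e^{-a|t|}$ blows up as $a\to 0$) and the paper instead expands $e^{-a|t|}$ in a power series and sums $\|\psi(t)t^n\|_{H^1_t}/n!$. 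You bypass the case-splitting by isolating the homogeneous seminorm and applying the physical-space (Gagliardo) product inequality, for which the only inputs are $\|e^{-a|\cdot|}\|_{L^\infty}=1$ and the exact dilation invariance $\|e^{-a|\cdot|}\|_{\dot H^{1/2}(\R)}=\|e^{-|\cdot|}\|_{\dot H^{1/2}(\R)}$; the remaining $L^2$ pieces you control by direct integration, $\|g_a\|_{L^2_t}^2\lesssim\min(1,a^{-1})$, which is in substance the paper's $b=0$ estimate. The trade-off: the paper's Fourier-side formalism treats both exponents $b=0$ and $b=\tfrac12$ in one framework and is the form in which such lemmas are usually quoted, whereas your argument is more unified (one estimate valid for all $a\ge 0$, no small-frequency regime) and makes the structural reason for the exponent $\tfrac12$ transparent — the corner of $e^{-a|t|}$ carries scale-invariant $\dot H^{1/2}$ content — a point the paper's Taylor-series patch at low frequency obscures.
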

\begin{proof} Set $g_{\xi }=\psi (t)e^{-\mid
t\mid \mid \xi \mid ^{2\alpha }}$. For $b\in \{0,\,\frac{1}{2}\}$
we have
\begin{eqnarray*}\parallel g_{\xi }\parallel _{H_{t}^{b}}\leq \parallel <\tau >^{b}\hat{\psi }\parallel _{L^{1}}
\parallel
e^{-|t| \mid \xi \mid ^{2\alpha }}\parallel _{L^{2}}+\parallel
\hat{\psi }\parallel _{L^{1}}\parallel e^{-|t| \mid \xi \mid
^{2\alpha }}\parallel _{  {H_{t}^{b}}}.\end{eqnarray*} Since $\psi
\in C_{0}^{\infty }(R),supp\psi \subset [-2,2]$, we get $\parallel
<\tau
>^{b}\hat{\psi }\parallel _{L^{1}}\leq C$. Note that
\[\parallel e^{-\mid t\mid \mid \xi \mid ^{2\alpha }}\parallel
_{  {H_t^{b}}}\sim (\mid \xi \mid ^{2\alpha
})^{b-\frac{1}{2}}\parallel e^{-\mid t\mid }\parallel _{
{H_t^{b}}}.\] We deduce for $\mid \xi \mid \ge 1$
\begin{equation}\parallel g_{\xi }\parallel _{H_t^{b}}\lesssim (\mid \xi \mid ^{-\alpha }
+\mid \xi \mid ^{2\alpha b-\alpha }) \leq C\mid \xi \mid ^{2\alpha
(b-\frac{1}{2})},\label{1.4}\end{equation}
 and for $\mid \xi \mid \leq 1$,
\begin{equation}\parallel g_{\xi }\parallel
_{H_{t}^{b}}\leq \sum _{n=0}^{\infty }\frac{\mid \xi \mid
^{2\alpha n}}{n!}\parallel \psi (t)t^{n}\parallel _{H_{t}^{b}}\leq
\sum _{n=0}^{\infty }\frac{\mid \xi \mid ^{2\alpha
n}}{n!}\parallel \psi (t)t^{n}\parallel _{H_{t}^{1}}\lesssim 1.
\label{1.5}\end{equation}  A combination of (\ref{1.4}) with
(\ref{1.5}) yields
\begin{equation}\parallel g_{\xi }\parallel _{H_t^{b}}\lesssim <\xi
>^{\alpha (2b-1)}, \quad b=0\,\mbox{ or } \, \frac{1}{2}
.\label{2.2a}\end{equation} By  (\ref{2.2a}), we have
\begin{eqnarray*}&{}&\parallel \psi (t)W(t)\varphi \parallel_{ {X}_{\alpha }^{\frac{1}{2},s}}\\
&\lesssim& \left\| <\xi>^{s}\hat{\varphi }(\xi )\parallel <\tau
>^{\frac{1}{2}}\mathcal{F}_{t}(\psi (t)e^{-|t| \mid \xi \mid ^{2\alpha
}})(\tau )\parallel _{L_{\tau }^{2}}\right\|_{L_{\xi }^{2}}\\
&{}&+\left\| <\xi>^{s+\alpha }\hat{\varphi }(\xi )\parallel \psi
(t)e^{-|t| \mid \xi \mid ^{2\alpha }}\parallel
_{L_{t}^{2}}\right\|_{L_{\xi
}^{2}}\\
&\lesssim& \left\| <\xi>^{s}\hat{\varphi }(\xi )\parallel g_{\xi
}(t)\parallel _{H_{t}^{\frac{1}{2}}}\right\|_{L_{\xi }^{2}}
+\left\| <\xi>^{s+\alpha }\hat{\varphi }(\xi )\parallel g_{\xi
}(t)\parallel_{H_{t}^{0}}\right\|_{L_{\xi
}^{2}}\\
&\lesssim& \parallel <\xi>^{s}\hat{\varphi }(\xi )\parallel
_{L_{\xi }^{2}} +C\parallel <\xi>^{s}\hat{\varphi }(\xi )\parallel
_{L_{\xi }^{2}}\lesssim\parallel \varphi \parallel _{
{H}^{s}}.\end{eqnarray*}\end{proof}

 The following
proposition comes from Proposition 2 in \cite{Monlinet} (we
replace $\xi$ by $|\xi|^{2\alpha}$).

\begin{proposition}\label{L2.2} For $\omega \in \mathcal{S}(\R^{2})$ we define  $K_{\xi
}$ by
\[K_{\xi }(t)=\psi (t)\int _{R}\frac{e^{it\tau }-e^{-|t| \mid
\xi \mid ^{2\alpha }}} {i\tau +\mid \xi \mid ^{2\alpha
}}\hat{\omega }(\tau )d\tau .\] Then for all $\xi \in \R$,
\begin{equation}\left\|<i\tau +\mid \xi \mid ^{2\alpha
}>^{\frac{1}{2}}\mathcal{F}_{t}(K_{\xi })\right\|_{L^{2}(\R)}^{2}
\lesssim\left [\left(\int _{\R}\frac{\mid \hat{\omega }(\tau )\mid
}{<i\tau +\mid \xi \mid ^{2\alpha }>}d\tau \right )^{2} +\int
_{\R}\frac{\mid \hat{\omega }(\tau )\mid ^{2}}{<i\tau +\mid \xi
\mid ^{2\alpha }>}d\tau \right ].\label{1.6}
\end{equation}\end{proposition}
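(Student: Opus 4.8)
The plan is to read $K_\xi$ as a cut-off Duhamel term. Writing $a=|\xi|^{2\alpha}$ for the fixed parameter, the quantity $\int_\R\frac{e^{it\tau}-e^{-|t|a}}{i\tau+a}\hat\omega(\tau)\,d\tau$ is (up to the factor $2\pi$) the solution of $v'+av=\omega$ with $v(0)=0$, and the two terms on the right-hand side of (\ref{1.6}) are exactly what one expects for its homogeneous and inhomogeneous contributions. Accordingly I would first split the $\tau$-integration into the low set $\{|i\tau+a|\le 1\}$ and the high set $\{|i\tau+a|>1\}$, writing $K_\xi=K_\xi^{lo}+K_\xi^{hi}$. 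On the high set, where $|i\tau+a|\sim\langle i\tau+a\rangle\gtrsim 1$, the denominator is bounded away from $0$, so I can legitimately split the numerator and write $K_\xi^{hi}(t)=I(t)-c\,\psi(t)e^{-|t|a}$ with $I(t)=\psi(t)\int_{|i\tau+a|>1}\frac{e^{it\tau}}{i\tau+a}\hat\omega\,d\tau$ and $c=\int_{|i\tau+a|>1}\frac{\hat\omega}{i\tau+a}\,d\tau$.

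For the homogeneous piece $c\,\psi(t)e^{-|t|a}$ I would reuse the computation from the proof of Proposition \ref{L2.1}: since $\langle i\tau+a\rangle^{1/2}\sim\langle\tau\rangle^{1/2}+a^{1/2}$, the bounds $\|g_\xi\|_{H_t^{1/2}}\lesssim 1$ and $a\|g_\xi\|_{L_t^2}^2\lesssim 1$ established there give $\|\langle i\tau+a\rangle^{1/2}\mathcal{F}_t(\psi e^{-|t|a})\|_{L^2}\lesssim 1$. Combined with $|c|\le\int_{|i\tau+a|>1}\frac{|\hat\omega|}{|i\tau+a|}\,d\tau\lesssim\int_\R\frac{|\hat\omega|}{\langle i\tau+a\rangle}\,d\tau$, this piece is controlled by the first term on the right of (\ref{1.6}).

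The inhomogeneous piece $I$ is where the real work lies. Setting $h(\tau)=\frac{\hat\omega(\tau)}{i\tau+a}\chi_{|i\tau+a|>1}$, one has $I=2\pi\,\psi\cdot\mathcal{F}_t^{-1}h$, hence $\mathcal{F}_t(I)=\hat\psi*h$ up to constants. I would then distribute the weight by the elementary inequality $\langle i\sigma+a\rangle^{1/2}\lesssim\langle i\tau+a\rangle^{1/2}+\langle\sigma-\tau\rangle^{1/2}$ inside the convolution, so that $\langle i\sigma+a\rangle^{1/2}|\mathcal{F}_t(I)(\sigma)|$ is dominated by $|\hat\psi|*(\langle i\cdot+a\rangle^{1/2}|h|)+(\langle\cdot\rangle^{1/2}|\hat\psi|)*|h|$. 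Applying Young's inequality $L^2=L^1*L^2$ to each, using that $\hat\psi$ is Schwartz (so $\|\hat\psi\|_{L^1}$ and $\|\langle\cdot\rangle^{1/2}\hat\psi\|_{L^1}$ are finite), and using $|i\tau+a|\sim\langle i\tau+a\rangle$ on the high set to get both $\langle i\tau+a\rangle^{1/2}|h|\sim\langle i\tau+a\rangle^{-1/2}|\hat\omega|$ and $|h|^2\le\langle i\tau+a\rangle^{-1}|\hat\omega|^2$, reduces everything to $\int_\R\frac{|\hat\omega|^2}{\langle i\tau+a\rangle}\,d\tau$, the second term on the right of (\ref{1.6}). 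This weight-splitting plus Young step is the main obstacle.

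Finally, on the low set $\{|i\tau+a|\le 1\}$ one has $a\le 1$, $|\tau|\le 1$ and $\langle i\sigma+a\rangle\sim\langle\sigma\rangle$, so it suffices to bound $\|K_\xi^{lo}\|_{H_t^{1/2}}$. Here I would not split the numerator but exploit the cancellation directly: for $t$ in the support of $\psi$ the elementary estimates $|e^{it\tau}-e^{-|t|a}|\le |t|(|\tau|+a)\lesssim|t|\,|i\tau+a|$ and the analogous bound for the $t$-derivative give $\bigl|\frac{e^{it\tau}-e^{-|t|a}}{i\tau+a}\bigr|\lesssim|t|$ and $\bigl|\p_t\frac{e^{it\tau}-e^{-|t|a}}{i\tau+a}\bigr|\lesssim 1$ uniformly on the low set. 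Together with the cut-off $\psi$ this yields $\|K_\xi^{lo}\|_{H_t^{1/2}}\lesssim\int_{|i\tau+a|\le 1}|\hat\omega|\,d\tau\sim\int_{|i\tau+a|\le 1}\frac{|\hat\omega|}{\langle i\tau+a\rangle}\,d\tau$, again bounded by the first term of (\ref{1.6}). Summing the three contributions gives (\ref{1.6}).
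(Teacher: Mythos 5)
Your proof is correct, and the individual steps check out: the weight inequality $\langle i\sigma+a\rangle^{1/2}\lesssim\langle i\tau+a\rangle^{1/2}+\langle\sigma-\tau\rangle^{1/2}$ (from $|i\sigma+a|\le|i\tau+a|+|\sigma-\tau|$) combined with Young's inequality does reduce the oscillatory piece $\hat\psi*h$ to $\int_{\R}|\hat\omega|^{2}\langle i\tau+a\rangle^{-1}d\tau$, because $|i\tau+a|\sim\langle i\tau+a\rangle$ on your high set; the bound $\|\langle i\sigma+a\rangle^{1/2}\mathcal{F}_t(\psi e^{-|t|a})\|_{L^{2}}\lesssim 1$ does follow from the paper's estimate (\ref{2.2a}) with $b=0$ and $b=\frac12$; and on the low set the hypothesis $|i\tau+a|\le 1$ forces $a\le 1$, which is exactly what legitimizes trading the weight for $\langle\sigma\rangle^{1/2}$ and concluding via the uniform bound plus Minkowski's inequality. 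However, your argument cannot be said to follow ``the paper's proof,'' because the paper offers none: Proposition \ref{L2.2} is justified there only by citing Proposition 2 of \cite{Monlinet} with $\xi$ replaced by $|\xi|^{2\alpha}$. What you have produced is a self-contained reconstruction in the same circle of ideas as the cited reference (modulation splitting, separation of the numerator where the denominator is bounded below, weighted Young for the convolution, cancellation at low modulation), with two choices worth noting. First, you split at $|i\tau+a|=1$ rather than at $|\tau|=1$, so the low region is compact in both $\tau$ and $a$, and the elementary estimates $|e^{it\tau}-e^{-|t|a}|\lesssim |t|\,|i\tau+a|$ and $|\p_t(e^{it\tau}-e^{-|t|a})|\lesssim |i\tau+a|$ suffice; this avoids the power-series device that the paper itself uses in (\ref{1.5}) in the proof of Proposition \ref{L2.1}. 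Second, recycling (\ref{2.2a}) for the semigroup term ties the proposition to the paper's own linear estimates rather than to an external computation. The one point you should make explicit (it is implicit in your write-up) is that $e^{-|t|a}$ is only Lipschitz at $t=0$, so the derivative bound on the low set holds almost everywhere and yields an $H^{1}_t$, hence $H^{1/2}_t$, bound for $\psi(t)\frac{e^{it\tau}-e^{-|t|a}}{i\tau+a}$ uniformly in $\tau$; this is harmless but deserves a sentence.
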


\begin{proposition}\label{2.3} For $s\in \R$ we have

$[a].$\,  for all $v\in {\cal S}(\R^{2})$,
\begin{eqnarray}&{}&\left\|\chi _{R^{+}}(t)\psi (t)\int
_{0}^{t}W(t-t')v(t')dt'\right\|_{ {X}_{\alpha
}^{\frac{1}{2},s}}\nonumber \\
&\lesssim &\| v\|_{ {X}_{\alpha }^{-\frac{1}{2},s}}+\left
(\int_{\R} <\xi>^{2s}(\int_{\R} \frac{\mid \hat{v}(\tau )\mid
}{<i\tau +\mid \xi \mid ^{2\alpha }>}d\tau )^{2}d\xi\right
)^{\frac{1}{2}};\label{0.7}\end{eqnarray}

$[b].$\,  for $0<\delta <\frac{1}{2}$ and   for all $v\in
 {X}_{\alpha }^{-\frac{1}{2}+\delta ,s}$,
\begin{equation}\left\|\chi _{R^{+}}(t)\psi (t)\int
_{0}^{t}W(t-t')v(t')dt'\right\|_{ {X}_{\alpha }^{\frac{1}{2},s}}
\lesssim \parallel v\parallel _{ {X}_{\alpha
}^{-\frac{1}{2}+\delta
,s}}.\label{0.8}\end{equation}\end{proposition}
\begin{proof}Assume that $v\in {\cal S}(\R^{2})$. Taking that for $x$-Fourier
transform we get
\begin{eqnarray*}&{}&\chi _{R^{+}}(t)\psi (t)\int _{0}^{t}W(t-t')v(t')dt'\\
&=& U(t)\chi _{R^{+}}(t)\psi (t)\int_{\R} e^{ix\xi }\int
_{0}^{t}e^{-(t-t')\mid \xi \mid ^{2\alpha
}}\mathcal{F}_{x}(U(-t')v(t'))dt'd\xi\\
&=& U(t)\chi _{R^{+}}(t)\psi (t)\int_{\R^2} e^{ix\xi }\hat{\omega
}(\tau ,\xi )e^{-t\mid \xi \mid ^{2\alpha }}\int _{0}^{t}e^{t'\mid
\xi \mid ^{2\alpha }}e^{it'\tau }
dt'd\xi d\tau \\
&=& U(t)\chi _{R^{+}}(t)\psi (t)\int_{\R^2}  e^{ix\xi }\hat{\omega
}(\tau ,\xi )\frac{e^{it\tau }-e^{-t\mid \xi \mid ^{2\alpha
}}}{i\tau +\mid \xi \mid ^{2\alpha }}d\xi d\tau \\
&=& U(t)\chi _{R^{+}}(t) \int_{\R} e^{ix\xi
}K_{\xi}(t)d\xi,\end{eqnarray*} where we denote by  $\omega
(t')=U(-t')v(t')$.  By Proposition \ref{L2.2}, we deduce
\begin{eqnarray*}&{}&\left\| \chi _{R^{+}}(t)\psi (t)\int
_{0}^{t}W(t-t')v(t')dt'\right\|_{ {X}_{\alpha
}^{\frac{1}{2},s}}\le \left\|<i\tau+|\xi|^{2\alpha}>^{\frac
12}<\xi>^s
{\cal F}_t(K_\xi (t))\right\|_{L^2(\R^2)}\\
&\lesssim & \left (\int_{\R} <\xi>^{2s}(\int _{\R}\frac{\mid
\hat{\omega }(\tau )\mid }{<i\tau +\mid \xi \mid ^{2\alpha
}>}d\tau )^{2}d\xi \right )^{\frac{1}{2}} +\left (\int_{\R}
<\xi>^{2s}\int _{\R}\frac{\mid \hat{\omega }(\tau )\mid
^{2}}{<i\tau +\mid \xi \mid ^{2\alpha }>}d\tau d\xi \right )^{\frac{1}{2}}\\
&\lesssim& \left (\int_{\R} <\xi>^{2s}(\int _{\R}\frac{\mid
\hat{v}(\tau )\mid }{<i\tau +\mid \xi \mid ^{2\alpha }>}\parallel
e^{-it\xi ^{3}}\parallel _{L^{\infty }}d\tau )^{2}d\xi
\right)^{\frac{1}{2}} +\parallel
v\parallel _{ {X}_{\alpha }^{-\frac{1}{2},s}}\\
&\lesssim& \parallel v\parallel _{ {X}_{\alpha
}^{-\frac{1}{2},s}}+\left (\int_{\R} <\xi>^{2s}(\int
_{\R}\frac{\mid \hat{v}(\tau )\mid }{<i\tau +\mid \xi \mid
^{2\alpha }>}d\tau )^{2}d\xi \right
)^{\frac{1}{2}}.\end{eqnarray*}
 We complete the
proof of (\ref{0.7}). Now we prove (\ref{0.8}). For
$\delta\in(0,\frac 12)$, obviously
$$\parallel v\parallel _{ {X}_{\alpha
}^{-\frac{1}{2},s}}\leq
\parallel v\parallel _{ {X}_{\alpha }^{-\frac{1}{2}+\delta ,s}}.$$
 By $H\ddot
older$ inequality, we have
$$\int_{\R} \frac{\mid \hat{v}(\tau )\mid
}{<i\tau +\mid \xi \mid ^{2\alpha }>}d\tau \lesssim  \left\| |
\hat{v}(\tau )| <i\tau +\mid \xi \mid ^{2\alpha
}>^{-\frac{1}{2}+\delta }\right\|_{L^{2}(_{\R})},$$  and then
$$\left (\int_{\R} <\xi>^{2s} (\int_{\R}
\frac{\mid \hat{v}(\tau )\mid }{<i\tau +\mid \xi \mid ^{2\alpha
}>}d\tau )^{2}d\xi\right )^{\frac{1}{2}}\lesssim \parallel
v\parallel _{ {X}_{\alpha }^{-\frac{1}{2}+\delta ,s}}.$$
\end{proof}

\begin{proposition}\label{L2.4} Let $s\in \R$, $\delta >0$. For all $f\in  {X}_{\alpha }^{-\frac{1}{2}
+\delta ,s}$, one has
\begin{equation}t\mapsto \int _{0}^{t}W(t-t')f(t')dt'\in C(\R^{+}, {H}^{s+2\delta }). \label{9.1}\end{equation}
Moreover, if $\{f_{n}\}$ is a sequence with $f_{n}\rightarrow 0$
in $ {X}_{\alpha }^{-\frac{1}{2}+\delta ,s}$ as $n\rightarrow
\infty $, then
\begin{equation}\left\| \int _{0}^{t}W(t-t')f_{n}(t')dt'\right\|_{L^{\infty }(\R^{+}, {H}^{s+2\delta })}
\rightarrow 0,\quad  n\rightarrow\infty.
 \label{10}\end{equation}\end{proposition}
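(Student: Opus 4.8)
The plan is to pass to the spatial Fourier variable, identify the Duhamel term with the (cut-off--free) kernel of Proposition \ref{L2.2}, and convert the resolvent regularity $\langle i\tau+|\xi|^{2\alpha}\rangle^{-1/2+\delta}$ carried by $f$ into a gain of spatial derivatives by a Cauchy--Schwarz argument in $\tau$. Continuity in time and the convergence (\ref{10}) will then follow from a single uniform-in-$t$ bound together with a density argument.

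First I would repeat the computation from the proof of Proposition \ref{2.3}: for $f\in\mathcal S(\R^2)$ and $t\ge 0$,
$$\mathcal F_x\Big(\int_0^t W(t-t')f(t')\,dt'\Big)(\xi)=e^{it\xi^3}\int_\R\hat\omega(\tau,\xi)\,\frac{e^{it\tau}-e^{-t|\xi|^{2\alpha}}}{i\tau+|\xi|^{2\alpha}}\,d\tau,\qquad \omega(t')=U(-t')f(t'),$$
so that, writing $g(t)$ for the Duhamel integral, $\|g(t)\|_{H^{s+2\delta}}^2=\int_\R\langle\xi\rangle^{2(s+2\delta)}|\widetilde K_\xi(t)|^2\,d\xi$, where $\widetilde K_\xi$ is the kernel of Proposition \ref{L2.2} with the cut-off $\psi$ removed. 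The whole statement is thereby reduced to a pointwise-in-$t$ estimate of $\widetilde K_\xi(t)$, weighted by $\langle\xi\rangle^{2(s+2\delta)}$.

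The core estimate is a pointwise-in-$(t,\xi)$ bound of the form
$$\langle\xi\rangle^{s+2\delta}\,|\widetilde K_\xi(t)|\ \lesssim\ \langle\xi\rangle^{s}\Big(\int_\R|\hat\omega(\tau,\xi)|^2\,\langle i\tau+|\xi|^{2\alpha}\rangle^{-1+2\delta}\,d\tau\Big)^{1/2},$$
uniformly in $t\ge0$. For $|\xi|\gtrsim1$ the dissipative symbol forces $\langle i\tau+|\xi|^{2\alpha}\rangle\gtrsim\langle\xi\rangle^{2\alpha}\gtrsim1$, so the trivial bound $|e^{it\tau}-e^{-t|\xi|^{2\alpha}}|\le 2$ gives $|\widetilde K_\xi(t)|\lesssim\int_\R|\hat\omega(\tau,\xi)|\langle i\tau+|\xi|^{2\alpha}\rangle^{-1}d\tau$, and Cauchy--Schwarz in $\tau$ against $\langle i\tau+|\xi|^{2\alpha}\rangle^{-1-2\delta}$ converts the resolvent decay into spatial smoothing; it is precisely this $\tau$-integration of the dissipative weight that is responsible for the claimed gain of $2\delta$ derivatives. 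Squaring, multiplying by $\langle\xi\rangle^{2s}$, integrating in $\xi$ and substituting $\tau\mapsto\tau+\xi^3$ to recognise the norm of $X_\alpha^{-1/2+\delta,s}$ then yields $\|g(t)\|_{H^{s+2\delta}}\lesssim\|f\|_{X_\alpha^{-1/2+\delta,s}}$.

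The delicate point, and the step I expect to be the main obstacle, is the region $|\xi|\lesssim1$: there the resolvent kernel $1/(i\tau+|\xi|^{2\alpha})$ is singular at the space--time origin and the crude bound $|e^{it\tau}-e^{-t|\xi|^{2\alpha}}|\le 2$ no longer controls $\widetilde K_\xi(t)$ uniformly in $t$. Here one must exploit the cancellation $(e^{it\tau}-e^{-t|\xi|^{2\alpha}})/(i\tau+|\xi|^{2\alpha})=\int_0^t e^{i\tau t'-(t-t')|\xi|^{2\alpha}}\,dt'$ together with the oscillation in $\tau$ (a Dirichlet-type estimate); this is exactly the information encoded by the first term on the right-hand side of Proposition \ref{L2.2}, and it is what keeps the bound uniform for $t\in\R^+$. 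Granting the uniform bound, the remaining assertions are soft: for $f\in\mathcal S(\R^2)$ the map $t\mapsto g(t)$ is plainly continuous (indeed smooth) into $H^{s+2\delta}$, so approximating a general $f\in X_\alpha^{-1/2+\delta,s}$ by Schwartz functions and invoking the uniform bound shows the approximants are Cauchy in the complete space $C_b(\R^+,H^{s+2\delta})$, whence $g\in C(\R^+,H^{s+2\delta})$, which is (\ref{9.1}). Finally (\ref{10}) is immediate, since applying the uniform bound to $f_n$ gives $\sup_{t\in\R^+}\|\int_0^t W(t-t')f_n(t')\,dt'\|_{H^{s+2\delta}}\lesssim\|f_n\|_{X_\alpha^{-1/2+\delta,s}}\to0$.
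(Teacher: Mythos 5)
The paper offers no argument for this proposition at all---it simply cites Proposition 4 of \cite{Monlinet}---so there is nothing to compare line by line; your skeleton (Fourier representation of the Duhamel term, a fixed-$\xi$ kernel bound, Cauchy--Schwarz in $\tau$, density for continuity) is the natural one and is essentially that of the cited proof. However, both quantitative claims your proposal rests on fail, so there are genuine gaps. First, the high-frequency step does not give the claimed gain. Since $\langle i\tau+|\xi|^{2\alpha}\rangle\sim(\tau^{2}+\langle\xi\rangle^{4\alpha})^{1/2}$ for $|\xi|\gtrsim 1$, the substitution $\tau=\langle\xi\rangle^{2\alpha}u$ gives
$$\int_{\R}\langle i\tau+|\xi|^{2\alpha}\rangle^{-1-2\delta}\,d\tau\ \lesssim_{\delta}\ \langle\xi\rangle^{-4\alpha\delta},$$
and this is sharp; so your Cauchy--Schwarz argument yields a gain of $\langle\xi\rangle^{2\alpha\delta}$, i.e.\ membership in $H^{s+2\alpha\delta}$, not $H^{s+2\delta}$. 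The two coincide exactly when $\alpha=1$, the case treated in \cite{Monlinet}, but the proposition here is asserted for all $0\le\alpha\le1$, and for $\alpha<1$ no repair is possible: testing on a time-localized free wave $f=\psi(t)U(t)g$, with $\hat g$ an $L^{2}$-normalized bump at $|\xi|\sim N$, gives $\|f\|_{X_\alpha^{-1/2+\delta,s}}\sim N^{s-\alpha+2\alpha\delta}$ while $\|\int_0^1W(1-t')f(t')\,dt'\|_{H^{s+2\delta}}\sim N^{s+2\delta-2\alpha}$, a ratio $N^{2\delta(1-\alpha)-\alpha}$ which is unbounded once $2\delta(1-\alpha)>\alpha$.

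Second, and more fundamentally, the ``single uniform-in-$t$ bound'' on which everything else in your write-up rests (continuity, the density argument, and (\ref{10}) alike) does not exist, because of low frequencies at large times. Take $\alpha>0$ and $0<\delta<\frac12$, let $\theta$ be a nonnegative smooth bump equal to $1$ on $[0,T]$ and supported in $[-1,T+1]$, and choose $f(t')=\theta(t')\,U(t')\phi^{\vee}$ with $\phi$ an $L^{2}$-normalized bump supported in $\{|\xi|\le T^{-1/(2\alpha)}\}$, so that $\hat\omega(\tau,\xi)=\hat\theta(\tau)\phi(\xi)$. Since the weight $\langle i\tau+|\xi|^{2\alpha}\rangle^{-1+2\delta}$ is at most $1$, Plancherel gives $\|f\|_{X_\alpha^{-1/2+\delta,s}}\lesssim\|\theta\|_{L^2_t}\lesssim T^{1/2}$; but at $t=T$,
$$\mathcal F_x\Bigl(\int_0^T W(T-t')f(t')\,dt'\Bigr)(\xi)=e^{iT\xi^{3}}\phi(\xi)\int_0^T e^{-(T-t')|\xi|^{2\alpha}}\theta(t')\,dt',\qquad \int_0^T e^{-(T-t')|\xi|^{2\alpha}}\theta(t')\,dt'\ \ge\ e^{-1}(T-2),$$
because $T|\xi|^{2\alpha}\le1$ on the support of $\phi$. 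Hence $\|\int_0^TW(T-t')f\,dt'\|_{H^{s+2\delta}}\gtrsim T$, and your claimed pointwise bound fails by a factor $T^{1/2}$. Note that no ``Dirichlet-type'' use of oscillation can rescue this: here $\hat\omega$ is concentrated at $\tau\approx0$, where there is nothing to oscillate. This is precisely why Proposition \ref{L2.2} carries the cutoff $\psi(t)$; dropping it is not cosmetic. (Rescaling this $f$ by $T^{-3/4}$ and letting $T\to\infty$ even shows that (\ref{10}) is false as literally written.) A correct argument must keep the time localization throughout: one proves the bound for $f$ supported in a fixed bounded time interval --- the only situation in which the proposition is ever applied --- with constants depending on that interval, and handles $t$ beyond the support by writing the Duhamel term as $W(t-t_0)$ applied to its value at $t_0$ and using that $W$ is a contraction on $H^{\sigma}$; even then the gain one obtains by your method is $2\alpha\delta$, not $2\delta$, derivatives.
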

\begin{proof} The proof is similar to that of  Proposition 4 in \cite{Monlinet}, we omit it.\end{proof}

\section{A bilinear estimate and the proof of Theorem \ref{T1.1}}

Let $Z$ be any abelian additive group with an invariant measure
$d\eta$. For any integer $k\ge 2$, we   denote by $\Gamma_k(Z)$
the hyperplane
$$\Gamma_k(Z)=\{(\eta_1,\cdots, \eta_k)\in Z^k: \eta_1+\cdots
+\eta_k=0\},$$  we endow with the obvious measure
$$\int_{\Gamma_k(Z)}f:=\int_{Z^{k-1}}f(\eta_1,\cdots,\eta_{k-1},-\eta_1-\cdots-\eta_{k-1})d\eta_1\cdots
d\eta_{k-1}.$$ We define a $[k; Z]$-multiplier to be any function
$m: \Gamma_k(Z)\to \C$ . If m is a $[k; Z]$-multiplier, we define
$\|m\|_{[k; Z]}$ to be the best constant such that the inequality
$$\left |\int_{\Gamma_k(Z)}m(\eta)\Pi_{j=1}^kf_j(\eta_j)\right |\le
\|m\|_{[k;Z]}\Pi_{j=1}^k\|f_j\|_{L^2(Z)},$$ holds for all test
functions $f_j$ on $Z$.

In the sequel,  we choose $Z=\R\times \R$, $k=3$ and $\eta=(\tau,
\xi)$. For $N_1, N_2, N_3 > 0$, we define the quantities $
N_{max}\ge N_{med}\ge N_{min}$ to be the maximum, median and
minimum of $N_1, N_2, N_3$ respectively. Similarly define $
L_{max}\ge L_{med}\ge L_{min}$ whenever $L_1, L_2, L_3\ge 1$.
Define
$$h_j(\xi_j)=i\xi_j^3-|\xi_j|^{2\alpha}, \lambda_j=i\tau_j-h_j(\xi_j),\,  j=1,2,3,$$
and
$$h(\xi)=h_1(\xi_1)+h_2(\xi_2)+h_3(\xi_3).$$
We  shall take homogenous dyadic decomposition of the variable
$|\xi_j|\sim N_j>0$, and take non-homogenous dyadic decomposition
of the variable $|\lambda_j|\sim L_j\ge 1$ as well as the function
$|h(\xi)|\sim H\ge 1$ ( here the notations $|\lambda_j|\sim 1$ and
$|h(\xi)|\sim 1$ mean $|\lambda_j|\le 1$, $|h(\xi)|\le 1$,
respectively ). Define
$$X_{N_1,N_2,N_3;H;L_1,L_2,L_3} :=\chi_{|h(\xi)|\sim
H}\Pi_{j=1}^3\chi_{|\xi_j|\sim N_j} \chi_{|\lambda_j|\sim L_j}.$$
\begin{lemma}\label{L3.1} Let $N_1, N_2, N_3>0$, $L_1, L_2, L_3\gtrsim
1$ and $H\gtrsim 1$ satisfy
\begin{equation}N_{max}\sim N_{med}, L_{max}\sim \max\{H,L_{med}\}, H\sim
\max\{N_{max}^2N_{min},N_{max}^{2\alpha}\}.
\label{3.1a}\end{equation}

(1).  In the high modulation case $L_{max}\sim L_{med}\gg H$ we
have
\begin{equation}\left\|X_{N_1,N_2,N_3;H;L_1,L_2,L_3}\right\|_{[3,\R\times
\R]}\lesssim L_{min}^{\frac 12}N_{min}^{\frac
12}.\label{9}\end{equation}

(2). In the low modulation case $L_{max}\sim H$,

{}\qquad (2a). if $N_{max}\sim N_{med}\sim N_{min}$,  we have
\begin{equation}\left\|X_{N_1,N_2,N_3;H;L_1,L_2,L_3}\right\|_{[3, \R\times
\R]}\lesssim L_{min}^{\frac 12}\min\{N_{max}^{-\frac
14}L_{med}^{\frac 14}, L_{med}^{\frac
1{4\alpha}}\};\label{12}\end{equation}

{}\qquad (2b). if $N_2\sim N_3\gg N_1$ and $H\sim L_1\geq
L_2,L_3$,
 we have, for any $\beta\in (0,2]$,
\begin{equation}\left\|X_{N_1,N_2,N_3;H;L_1,L_2,L_3}\right\|_{[3, \R\times
\R]}\lesssim L_{min}^{\frac 12}\min\{N_1^{\frac 12},
L_{med}^{\frac 1{4\alpha}}, N_2^{\frac{\beta-2}{2\beta}}
N_1^{-\frac{1}{2\beta}}L_{med}^{\frac{1}{2\beta}}\};\label{13}\end{equation}

{}\qquad Similarly for permutations;

{}\qquad (2c).  In all other cases, we have
\begin{equation}\left\|X_{N_1,N_2,N_3;H;L_1,L_2,L_3}\right\|_{[3,\R\times
\R]}\lesssim L_{min}^{\frac 12}\min\{N_{max}^{-1}L_{med}^{\frac
12}, L_{med}^{\frac 1{4\alpha}}, N_{min}^{\frac
12}\}.\label{14}\end{equation}\end{lemma}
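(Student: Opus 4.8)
The plan is to treat this as a dyadic multiplier estimate in the style of Tao's weighted convolution calculus, and to organize everything around the single algebraic identity on $\Gamma_3(\R\times\R)$: since $\tau_1+\tau_2+\tau_3=0$ and $\xi_1+\xi_2+\xi_3=0$, one has $\lambda_1+\lambda_2+\lambda_3=-h(\xi)$, where $\mathrm{Im}\,h=\xi_1^3+\xi_2^3+\xi_3^3=3\xi_1\xi_2\xi_3$ and $\mathrm{Re}\,h=-(|\xi_1|^{2\alpha}+|\xi_2|^{2\alpha}+|\xi_3|^{2\alpha})$. This immediately explains (\ref{3.1a}): $|h|\sim\max\{N_{max}^2N_{min},N_{max}^{2\alpha}\}$, and the sum of the $\lambda_j$ being $-h$ forces $L_{max}\sim\max\{H,L_{med}\}$. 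By the symmetry of the multiplier norm under permuting the three slots I may freely reorder the frequencies, so the clause ``similarly for permutations'' in (2b) costs nothing. The whole lemma then reduces to quantifying, regime by regime, how much the \emph{dispersive} resonance $3\xi_1\xi_2\xi_3$ and the \emph{dissipative} weight $\sum|\xi_j|^{2\alpha}$ can be exploited.

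First I would record three elementary building blocks. (i) The universal $L^2$ box bound $\|X\|_{[3,\R\times\R]}\lesssim L_{min}^{1/2}N_{min}^{1/2}$: this follows from Cauchy--Schwarz because the constraint $\eta_1+\eta_2+\eta_3=0$ splits into independent constraints $\tau_1+\tau_2+\tau_3=0$ and $\xi_1+\xi_2+\xi_3=0$, so the $\tau$-convolution and the $\xi$-convolution decouple, allowing one to gain $L_{min}^{1/2}$ from the narrowest modulation slab and $N_{min}^{1/2}$ from the narrowest frequency slab (two distinct slots). (ii) The dissipative bound: since $\mathrm{Re}\,\lambda_j=|\xi_j|^{2\alpha}\le|\lambda_j|\sim L_j$, every slab obeys $N_j\lesssim L_j^{1/(2\alpha)}$; feeding the frequency localization $N\lesssim L_{med}^{1/(2\alpha)}$ of a second slot into the box bound in place of $N_{min}$ gives $\|X\|_{[3]}\lesssim L_{min}^{1/2}L_{med}^{1/(4\alpha)}$. (iii) The dispersive (bilinear) bound: freezing the lowest frequency and differentiating the resonance along $\xi_1+\xi_2=\mathrm{const}$ gives $\partial_{\xi_1}(3\xi_1\xi_2\xi_3)=3\xi_3(\xi_2-\xi_1)$, so the map from a high frequency to $\mathrm{Im}\,h$ has Jacobian $\sim N_{min}|\xi_1-\xi_2|$; confining $\mathrm{Im}\,h$ to a window of length $L_{med}$ then confines that frequency to measure $\sim L_{med}/(N_{min}|\xi_1-\xi_2|)$, which on insertion into the box bound produces gains of the form $L_{min}^{1/2}N_{max}^{-1}L_{med}^{1/2}=L_{min}^{1/2}(L_{med}/N_{max}^2)^{1/2}$ whenever the transversality $|\xi_1-\xi_2|\sim N_{max}$ is non-degenerate.

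With these in hand the cases are bookkeeping. In the high-modulation case (1), $L_{max}\sim L_{med}\gg H$ leaves no resonance to exploit, so (i) alone gives (\ref{9}). In the low-modulation case (2) we have $L_{max}\sim H$, hence $L_{med}\lesssim H\sim\max\{N_{max}^2N_{min},N_{max}^{2\alpha}\}$ and the resonance is maximal. For the generic sub-case (2c), the frequencies are separated and $|\xi_1-\xi_2|\sim N_{max}$, so (iii) yields the full gain $N_{max}^{-1}L_{med}^{1/2}$; combining with (ii) and (i) gives the three-term minimum (\ref{14}). The remaining sub-cases are exactly those where the transversality degenerates: in (2a) all frequencies are comparable, so $|\xi_1-\xi_2|$ may be small and one recovers only the weaker quarter-power bound $N_{max}^{-1/4}L_{med}^{1/4}$ (obtained by dyadically decomposing the gap $|\xi_1-\xi_2|\sim M$, applying (iii) on each piece, and summing), which with (ii) gives (\ref{12}); in (2b), $N_2\sim N_3\gg N_1$ with the maximal modulation carried by the low-frequency slot, the dispersive gain is governed by the small Jacobian $\sim N_1N_2$, and one interpolates between the pure dispersive endpoint $\beta=2$ (reproducing $N_1^{-1/4}L_{med}^{1/4}$) and the box/dissipative bounds, the free parameter $\beta\in(0,2]$ recording the interpolation weight and yielding the middle term $N_2^{(\beta-2)/(2\beta)}N_1^{-1/(2\beta)}L_{med}^{1/(2\beta)}$ of (\ref{13}).

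I expect the genuine difficulty to lie entirely in the degenerate-transversality sub-cases (2a) and (2b). The high-modulation and non-resonant bounds are almost immediate from the factorized box estimate, and the dissipative factor $L_{med}^{1/(4\alpha)}$ is a clean consequence of $N_j\lesssim L_j^{1/(2\alpha)}$. The delicate points are: making the Jacobian argument of (iii) rigorous as a biorthogonality statement about the convolution, uniformly in the frozen variable; correctly extracting the quarter power when $\xi_1$ and $\xi_2$ collide; and verifying that the one-parameter family in (2b) holds for \emph{every} $\beta\in(0,2]$ rather than only at the endpoints. A secondary obstacle is that $H$ switches between the dispersion-dominated regime $H\sim N_{max}^2N_{min}$ and the dissipation-dominated regime $H\sim N_{max}^{2\alpha}$, and one must check each claimed bound is consistent in both; this is precisely why the dissipative term $L_{med}^{1/(4\alpha)}$ must sit inside the minimum alongside the classical KdV dispersive terms in every sub-case.
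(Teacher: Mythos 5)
Your overall framework is the same as the paper's (Tao's $[3;Z]$ multiplier calculus, the resonance identity $\lambda_1+\lambda_2+\lambda_3=-h$, slice/measure estimates, plus the dissipative constraint $|\xi_j|^{2\alpha}\le|\lambda_j|$), and your treatment of case (1), of case (2a), and of the term $L_{med}^{\frac1{4\alpha}}$ is sound. But your building block (iii) is wrong as stated, and this breaks case (2c). In the Cauchy--Schwarz slice estimate you are not free to ``freeze the lowest frequency'': the slot you sup over is the slot whose constraints are discarded, so the resonance window for the two remaining slots is $O(\max\{L_a,L_b\})$ over those slots; unless the frozen slot carries $L_{max}$, that window is $O(L_{max})\sim O(H)$, as large as the resonance itself, and (\ref{15})--(\ref{16}) give no information at all. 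This is why the paper orders $L_1\ge L_2\ge L_3$ and sups over slot $1$. Moreover, even granting your freezing, the arithmetic does not close: with the lowest-frequency slot frozen the Jacobian is $3|\xi_{min}||\xi_a-\xi_b|\sim N_{min}N_{max}$, so the measure gain is $L_{med}/(N_{min}N_{max})$ and the norm bound is $L_{min}^{\frac12}(N_{min}N_{max})^{-\frac12}L_{med}^{\frac12}$, strictly weaker than the claimed $L_{min}^{\frac12}N_{max}^{-1}L_{med}^{\frac12}$ precisely when $N_{min}\ll N_{max}$, which is the situation of (2c). The paper gets (\ref{14}) because in (2c) the $L_{max}$ slot is a \emph{high}-frequency slot: freezing it, the identity (\ref{15}) has $|\xi|\sim N_{max}$ and $|\xi_2-\frac\xi2|\sim N_{max}$, i.e.\ Jacobian $\sim N_{max}^2$. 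Your recipe happens to coincide with the correct one only in (2a) (all frequencies comparable) and in (2b) (where by hypothesis $L_{max}$ sits on the low-frequency slot), which is exactly why those are the cases your sketch handles.

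The second gap is the $\beta$-family in (2b), which you assert holds for every $\beta\in(0,2]$ ``by interpolation'' and simultaneously flag as unverified. Interpolating the two true dispersive bounds --- the derivative bound, measure $\lesssim L_{med}/(N_1N_2)$, which is the $\beta=1$ term, and the square-root bound, measure $\lesssim(L_{med}/N_1)^{\frac12}$, which is the $\beta=2$ term --- yields exactly $\beta\in[1,2]$ and nothing below: squaring the third term of (\ref{13}) gives $N_2^{\frac{\beta-2}{\beta}}N_1^{-\frac1\beta}L_{med}^{\frac1\beta}=\frac{L_{med}}{N_1N_2}\bigl(\frac{L_{med}}{N_1N_2^2}\bigr)^{\frac{1-\beta}{\beta}}$, which for $\beta<1$ and $L_{med}\lesssim N_1N_2^2$ lies \emph{below} the derivative bound, hence cannot be reached by combining the available estimates. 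In fact no argument can close this range: taking $\alpha=\frac12$, $N_1=N^{\frac12}$, $N_2=N_3=N$, $L_1\sim H\sim N^{\frac52}$, $L_2=L_3=N^2$ and testing against boxes $\{\xi_j\in I_j,\ \tau_j-\xi_j^3\in[N^2,2N^2]\}$ with $|I_j|\sim N^{\frac12}$ shows $\|X\|_{[3,\R\times\R]}\gtrsim N^{\frac54}$, while (\ref{13}) at $\beta=\frac12$ would assert $\lesssim L_{min}^{\frac12}\cdot 1=N$. (The paper's own proof has the same defect: its claim that (\ref{15}) confines $\xi_2$ to intervals of length $O(N_1^{-\frac1\beta}N_2^{\frac{\beta-2}{\beta}}L_2^{\frac1\beta})$ rests on $|y-y'|^{\beta}N_2^{2-\beta}\lesssim|y-y'|\,|y+y'|$, which is valid only for $\beta\ge1$; this matters because Lemma \ref{L3.2} later chooses $\beta$ small.) So your outline actually establishes (1), (2a), and (2b) only for $\beta\in[1,2]$, and establishes (2c) not at all.
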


\begin{proof}We consider the high modulation case $L_{max}\sim L_{med}\gg
H$.  By using  the comparison principle (Lemma 3.1 in \cite{Tao}),
we have (without loss of generality we assume $L_1\ge L_2\ge L_3$
and $N_1\ge N_2\ge N_3$)
\begin{equation}\left\|X_{N_1,N_2,N_3;H;L_1,L_2,L_3}\right\|_{[3,\R\times
\R]}\lesssim \left\|\chi_{|\lambda_3|\sim L_{3}}\chi_{|\xi_3|\sim
N_3}\right\|_{[3,\R\times \R]}\end{equation} By  Lemma 3.14 and
Lemma 3.6 in \cite{Tao},
\begin{equation}\left\|\chi_{|\lambda_3|\sim L_{3}}\chi_{|\xi_3|\sim
N_3}\right\|_{[3,\R\times \R]}\lesssim \left\|
\|\chi_{|\lambda_3|\sim L_{3}}\|_{[3, \R]}\chi_{|\xi_3|\sim
N_3}\right\|_{[3,\R]}\lesssim L_3^{\frac 12}N_3^{\frac
12}.\label{8}\end{equation} Although we derived (\ref{8}) assuming
$L_1\ge L_2\ge L_3$ and $N_1\ge N_2\ge N_3$, it is clear from
symmetry that
\begin{equation}\left\|X_{N_1,N_2,N_3;H;L_1,L_2,L_3}\right\|_{[3,\R\times
\R]}\lesssim L_{min}^{\frac 12}N_{min}^{\frac
12}.\label{9.2}\end{equation}

We now consider the low modulation case $H\sim L_{max}$. Suppose
for the moment that $N_1\ge N_2 \ge N_3$. The $\xi_3$ variable  is
currently localized to the annulus $\{|\xi_3|\thicksim N_{3}\}$.
By a finite partition of unity we can restrict it further to a
ball $\{|\xi_3-\xi_3^0|\ll  N_{3}\}$ for some $|\xi_3^0|\thicksim
N_{3}$. Then by Box Localization ( Lemma 3.13 in \cite{Tao} ) we
may localize $\xi_1, \xi_2$ similarly to regions
$\{|\xi_1-\xi_1^0|\ll  N_{3}\}$ and $\{|\xi_2-\xi_2^0|\ll N_{3}\}$
where $|\xi_j^0|\thicksim N_j$. We may assume that
$|\xi_1^0+\xi_2^0+\xi_3^0|\ll N_3$ since we have
$\xi_1+\xi_2+\xi_3=0$. We  summarize this symmetrically as
\begin{equation}\left\|X_{N_1,N_2,N_3;H;L_1,L_2,L_3}\right\|_{[3,\R\times
\R]}\lesssim \left\|\chi_{|h(\xi)|\thicksim
H}\Pi_{j=1}^3\chi_{|\lambda_j|\sim L_{j}}\chi_{|\xi_j-\xi_j^0|\ll
N_{min}}\right\|_{[3,\R\times \R]},\end{equation} for some
$\xi_1^0, \xi_2^0,  \xi_3^0$ satisfying
$$|\xi_j^0|\thicksim N_j, |\xi_1^0+\xi_2^0+\xi_3^0|\ll N_{min}.$$

Without loss of generality, we assume  $L_1\ge L_2 \ge L_3$. By
Lemma 3.6 , Lemma 3.1 and Corollary 3.10 in \cite{Tao} we get
\begin{eqnarray}&{}&\left\|X_{N_1,N_2,N_3;H;L_1,L_2,L_3}\right\|_{[3,\R\times
\R]}\nonumber\\
&\lesssim & \left\|\chi_{|h(\xi)|\thicksim H}\Pi_{j=2
}^3\chi_{|\xi_j-\xi_j^0|\ll  N_{min}}\chi_{|\lambda_j|\sim
L_{j}}\right\|_{[3,\R\times \R]}\nonumber\\
&\lesssim & \left |\left\{(\tau_2,\xi_2): |\xi_2-\xi_2^0|\ll
N_{min},
|i\tau_2-h_2(\xi_2)|\thicksim L_2,\right. \right.\nonumber\\
&{}&\qquad \quad\left. \left. |\xi-\xi_2-\xi_3^0|\ll N_{min},
|i(\tau-\tau_2)-h_3(\xi-\xi_2)|\thicksim L_3\right \}\right
|^{\frac 12}\end{eqnarray} for some $(\tau, \xi)\in \R\times \R$.
For fixed $\xi_2$, the set of possible $\tau_2$ ranges in an
interval of length $O(\min\{L_2, L_3\})$, and vanishes unless
$$|i\tau-h_2(\xi_2)-h_3(\xi-\xi_2)|= O(\max\{L_2,L_3\}).$$
Then we get, for  some $(\tau, \xi)\in \R\times \R$,
\begin{eqnarray*}&{}&\left\|X_{N_1,N_2,N_3;H;L_1,L_2,L_3}\right\|_{[3,\R\times
\R]} \lesssim  L_3^{\frac 12}|\{\xi_2: |\xi_2-\xi_2^0|\ll N_{min},\nonumber\\
&{}&\qquad\qquad \quad |\xi-\xi_2-\xi_3^0|\ll N_{min},
|i\tau-h_2(\xi_2)-h_3(\xi-\xi_2)|=O(L_2)\}|^{\frac
12}.\end{eqnarray*} Note that the inequality
$|\xi-\xi_2-\xi_3^0|\ll N_{min}$ implies $|\xi-\xi_1^0|\ll
 N_{min}$. Then we have
\begin{eqnarray}&{}&\left\|X_{N_1,N_2,N_3;H;L_1,L_2,L_3}\right\|_{[3,\R\times
\R]}\nonumber\\
&\lesssim& L_3^{\frac 12}|\{\xi_2: |\xi_2-\xi_2^0|\ll N_{min},
 |\xi-\xi_1^0|\ll N_{min},\nonumber\\
 &{}&\qquad \qquad
|i\tau-h_2(\xi_2)-h_3(\xi-\xi_2)|=O(L_2)\}|^{\frac
12}.\label{11}\end{eqnarray} To compute the right-hand side of the
expression (\ref{11}) we   use the identity
$$|i\tau-h_2(\xi_2)-h_3(\xi-\xi_2)|=\left |i\tau-3i\xi(\xi_2-\frac{\xi}2)^2+i\frac{\xi^3}4+(|\xi_2|^{2\alpha}
+|\xi_2-\xi|^{2\alpha})\right |=O(L_2),$$
 which implies
\begin{equation}3\xi(\xi_2-\frac{\xi}2)^2+\frac{\xi^3}4 =\tau+O(L_2)\label{15}\end{equation}
and
\begin{equation}|\xi_2|^{2\alpha}+|\xi_2-\xi|^{2\alpha}=O(L_2).\label{16}\end{equation}
We need only consider three cases: $N_1\sim N_2\sim N_3$, $N_1\sim
N_2\gg N_3$, and $N_2\sim N_3\gg N_1$. (The case $N_1\sim N_3\gg
N_2$ then follows by symmetry).

If $N_1\sim N_2\sim N_3$, by $|\xi-\xi_1^0|\ll N_{min}$  we deduce
$|\xi|\sim N_1$. we see from (\ref{15}) that $\xi_2$ variable is
contained in the union of two intervals of length $O(N_1^{\frac
12}L_2^{\frac 12})$ at worst, and from (\ref{16}) that $|\xi_2|\le
L_2^{\frac 1{2\alpha}}$, and (\ref{12}) follows from (\ref{11}).

If $N_1\sim N_2\gg N_3$,  by $|\xi-\xi_1^0|\ll N_{min}$, $
|\xi_2-\xi_2^0-\frac{\xi-\xi_1^0}2-\xi_3^0|\ll N_{min}$ and
$$\left |\xi_2-\frac{\xi}2\right |=\left |\xi_2-\xi_2^0-\frac{\xi-\xi_1^0}2-\xi_3^0-\frac{\xi_1^0}2\right |$$
we get $|\xi|\sim N_1$ and $|\xi_2-\frac{\xi}2|\sim N_1$. we see
from (\ref{15}) that $\xi_2$ variable is contained in the union of
two intervals of length $O(N_1^{-2}L_2)$ at worst, and from
(\ref{16}) that $|\xi_2|\le L_2^{\frac 1{2\alpha}}$, and
(\ref{14}) follows from (\ref{11}).

If $N_2\sim N_3\gg N_1$, then we must have $|\xi|\sim N_1$ and
$|\xi_2-\frac{\xi}2|\sim N_2$.
 For a given $\beta\in (0, 2]$, we have $|\xi ||\xi_2-\frac{\xi}2|^{2-\beta}\sim N_1N_2^{2-\beta}$.
we see from (\ref{15}) that $\xi_2$ variable is contained in the
union of two intervals of length $O(N_1^{-\frac
1{\beta}}N_2^{\frac{\beta-2}{\beta}}L_2^{\frac 1{\beta}})$ at
worst, and from (\ref{16}) that $|\xi_2|\le L_2^{\frac
1{2\alpha}}$. (\ref{13}) follows from (\ref{11}) and the fact that
$|\xi_2-\xi_2^0|\ll N_1$ for some $|\xi_2^0|\ll N_2$.\end{proof}

\begin{lemma}\label{L3.2} For a given $\rho\in (\frac 12,
\min\{\frac {3+2\alpha}4, 1\})$ and for any $\delta>0$ small we
have
\begin{equation}\left\|\frac{\xi_3 <\xi_1>^{\rho}<\xi_2>^{\rho}<\xi_3>^{-\rho} }{<\lambda_1>^{\frac
12}<\lambda_2>^{\frac 12}<\lambda_3>^{\frac
12-\delta}}\right\|_{[3, \R\times \R]}\lesssim
1.\label{0}\end{equation}\end{lemma}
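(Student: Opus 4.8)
The plan is to follow the $[k,Z]$-multiplier machinery of \cite{Tao} and reduce (\ref{0}) to the dyadic pieces already controlled by Lemma \ref{L3.1}. First I would decompose the multiplier
$$m=\frac{\xi_3\langle\xi_1\rangle^{\rho}\langle\xi_2\rangle^{\rho}\langle\xi_3\rangle^{-\rho}}{\langle\lambda_1\rangle^{\frac12}\langle\lambda_2\rangle^{\frac12}\langle\lambda_3\rangle^{\frac12-\delta}}$$
according to $|\xi_j|\sim N_j$, $|\lambda_j|\sim L_j$ and $|h(\xi)|\sim H$, writing $m=\sum m\,X_{N_1,N_2,N_3;H;L_1,L_2,L_3}$. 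By the triangle inequality for the multiplier norm and the comparison principle (Lemma 3.1 in \cite{Tao}), on each block $m$ is comparable to the constant obtained by replacing $\langle\xi_j\rangle$ by $\langle N_j\rangle$, $|\xi_3|$ by $N_3$ and $\langle\lambda_j\rangle$ by $L_j$, so that
$$\|m\|_{[3,\R\times\R]}\lesssim\sum\frac{N_3\langle N_1\rangle^{\rho}\langle N_2\rangle^{\rho}\langle N_3\rangle^{-\rho}}{L_1^{\frac12}L_2^{\frac12}L_3^{\frac12-\delta}}\left\|X_{N_1,N_2,N_3;H;L_1,L_2,L_3}\right\|_{[3,\R\times\R]}.$$
On $\Gamma_3(\R\times\R)$ one has $\xi_1+\xi_2+\xi_3=0$ and $\tau_1+\tau_2+\tau_3=0$, so $\lambda_1+\lambda_2+\lambda_3=-h(\xi)$, and the identity $\xi_1^3+\xi_2^3+\xi_3^3=3\xi_1\xi_2\xi_3$ gives $|h(\xi)|\sim\max\{N_{max}^2N_{min},N_{max}^{2\alpha}\}$ together with $N_{max}\sim N_{med}$ and $L_{max}\sim\max\{H,L_{med}\}$; thus (\ref{3.1a}) holds and the bounds (\ref{9})--(\ref{14}) apply. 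I would also record the purely dissipative constraint $L_j\gtrsim N_j^{2\alpha}$, which follows from $|\lambda_j|=|i(\tau_j-\xi_j^3)+|\xi_j|^{2\alpha}|\ge|\xi_j|^{2\alpha}$; this is the feature that will lift the estimate above the KdV threshold.

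The next step is the summation, where essentially all the work lies. I would carry out the modulation sum first: one factor $L_{min}^{1/2}$ from Lemma \ref{L3.1} cancels one half-power of the modulation weights, and the remaining geometric series in $L_{med}$ and $L_{max}$ converge because the surviving exponents are negative (the small loss $L_3^{-\delta}$ being absorbed through $L_3\le L_{max}$ at the cost of an arbitrarily small power). In the high-modulation regime $L_{max}\sim L_{med}\gg H$ this is immediate from (\ref{9}). In the low-modulation regime $L_{max}\sim H$ the point is that $L_{max}$ is pinned to $\max\{N_{max}^2N_{min},N_{max}^{2\alpha}\}$ by the frequencies, while $L_{med}$ is forced from below by $L_{med}\gtrsim N_{max}^{2\alpha}$; summing the dissipatively truncated series (e.g.\ $\sum_{L_{med}\gtrsim N_{max}^{2\alpha}}L_{med}^{-\frac14}\sim N_{max}^{-\frac{\alpha}2}$) is precisely what produces the gain beyond the dispersive estimate. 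After this the problem becomes a pure frequency sum.

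The remaining frequency sum I would split along the cases of Lemma \ref{L3.1}: $N_1\sim N_2\sim N_3$ via (\ref{12}), the high$\times$high interactions and their permutations via (\ref{13}), and the rest via (\ref{14}). The hard part will be the resonant high$\times$high interactions, where the weights make the symbol as large as $\langle N_{max}\rangle^{2\rho}$ and little compensation seems available. Here I would exploit the freedom to pick the optimal branch in the $\min$ of (\ref{13})--(\ref{14}) and to choose $\beta$: the dispersive branch ($N_{max}^{-1/4}L_{med}^{1/4}$, resp.\ the $\beta$-family) alone recovers only the KdV exponent $\rho<\tfrac34$, whereas combining it with the dissipative information — the branch $L_{med}^{1/(4\alpha)}$ and, above all, the forced lower bound $L_j\gtrsim N_j^{2\alpha}$ on the modulations — supplies the extra decay. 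One then checks that the net power of $N_{max}$ is strictly negative precisely in the range $\rho<\min\{\tfrac{3+2\alpha}4,1\}$: the dispersive mechanism caps the exponent at $\tfrac34$, and the dissipative lower bound furnishes the additional room, saturating at $\tfrac{3+2\alpha}4$ for small $\alpha$ and at $1$ in the regime where the dissipation fully controls the high-frequency inputs. The main obstacle is therefore not a single inequality but the bookkeeping that forces every such frequency series to converge up to the sharp exponent simultaneously; the $\delta$ in $\langle\lambda_3\rangle^{1/2-\delta}$ stays harmless throughout and is exactly what the Duhamel estimate (\ref{0.8}) demands.
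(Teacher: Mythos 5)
Your overall strategy coincides with the paper's: dyadic decomposition in $|\xi_j|\sim N_j$, $|\lambda_j|\sim L_j$, $|h(\xi)|\sim H$, the resonance identity forcing (\ref{3.1a}), the block estimates of Lemma \ref{L3.1}, and the dissipative lower bound $L_j\gtrsim N_j^{2\alpha}$ as the source of the gain beyond the KdV exponent. But there is a genuine gap at the decisive step: you control $\|m\|_{[3,\R\times\R]}$ by the triangle inequality, i.e.\ by the \emph{sum} over all dyadic blocks of their $[3,\R\times\R]$-norms, and you then assert that ``the net power of $N_{max}$ is strictly negative'' in every case, so that the sum over the dyadic parameter $N\sim N_{max}\sim N_{med}$ converges. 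That assertion is false. Consider the high-modulation contribution with $N_1\sim N_3\sim N\gg N_2$, $N_2\lesssim 1$: there the weight is $\sim N/(L_1^{\frac12}L_2^{\frac12}L_3^{\frac12-\delta})$ (the $\rho$-powers cancel between $\langle\xi_1\rangle^{\rho}$ and $\langle\xi_3\rangle^{-\rho}$), the block bound (\ref{9}) gives $L_{min}^{\frac12}N_2^{\frac12}$, and carrying out the $L$- and $N_2$-sums exactly as in the paper's (\ref{3.1e}) leaves a contribution $\sim N^{-\alpha+4\alpha\delta}$ \emph{per dyadic $N$}, independent of $\rho$. This is not summable over $N\gtrsim 1$ when $\alpha=0$ (a value the lemma covers, with $\rho\in(\frac12,\frac34)$), and for small $\alpha>0$ the $N$-sum costs a constant $\sim\alpha^{-1}$; the same exponent appears in the low-modulation case (\ref{3.4b}).

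This is precisely why the paper does not sum over $N$: after the decomposition it applies Schur's test (Lemma 3.11 in \cite{Tao}), which exploits the constraint $N_{max}\sim N_{med}\sim N$ — two of the three frequencies comparable — to obtain almost orthogonality of the blocks across different $N$, and thereby replaces $\sum_{N\gtrsim 1}$ by $\sup_{N\gtrsim 1}$, as in (\ref{5}). With a supremum, per-$N$ bounds that are merely $O(1)$, such as $N^{-\alpha+4\alpha\delta}$ at $\alpha=0$, are acceptable, and only then does the exponent bookkeeping you describe suffice. So you must insert the Schur-test reduction (or an equivalent orthogonality argument) between the dyadic decomposition and the summation; without it the argument does not close for all $\alpha\in[0,1]$. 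Two smaller points: the paper first splits off the region $|\xi_1|,|\xi_2|,|\xi_3|\lesssim 1$, since the dictionary $H\sim\max\{N_{max}^{2}N_{min},N_{max}^{2\alpha}\}$ under the convention $H\gtrsim 1$ only makes sense once $N_{max}\gtrsim 1$; and the case-by-case summation you compress into one sentence (the paper's cases (2a), (2b) with its subcases A--D, and (2c)) is where essentially all of the work lies, so a complete write-up cannot leave it as an assertion.
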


\begin{proof} We have
\begin{eqnarray*}&{}&\left\|\frac{\xi_3
<\xi_1>^{\rho}<\xi_2>^{\rho}<\xi_3>^{-\rho}\chi_{|\xi_1|\lesssim
1}\chi_{|\xi_2|\lesssim 1}\chi_{|\xi_3|\lesssim
1}}{<\lambda_1>^{\frac 12}<\lambda_2>^{\frac 12}<\lambda_3>^{\frac
12-\delta}}\right\|_{[3,\R\times \R]}\nonumber\\
&\lesssim &\left\|\frac{\chi_{|\xi_1|\lesssim
1}\chi_{|\xi_2|\lesssim 1}\chi_{|\xi_3|\lesssim
1}}{<\lambda_1>^{\frac 12}<\lambda_2>^{\frac 12}<\lambda_3>^{\frac
12-\delta}}\right\|_{[3,\R\times \R]}.\end{eqnarray*} By taking
the non-homogenous dyadic decomposition of the variable
$|\lambda_j|\sim L_j\ge 1$( here the notation $|\lambda_j|\sim
L_j=1$ means $|\lambda_j|\le 1$), we get
\begin{eqnarray}&{}&\left\|\frac{\xi_3
<\xi_1>^{\rho}<\xi_2>^{\rho}<\xi_3>^{-\rho}\chi_{|\xi_1|\lesssim
1}\chi_{|\xi_2|\lesssim 1}\chi_{|\xi_3|\lesssim
1}}{<\lambda_1>^{\frac 12}<\lambda_2>^{\frac 12}<\lambda_3>^{\frac
12-\delta}}\right\|_{[3,\R\times \R]}\nonumber\\
&\lesssim &\sum_{L_1, L_2, L_3\gtrsim 1}
\frac{\left\|\Pi_{j=1}^3\chi_{|\xi_j|\lesssim
1}\chi_{|\lambda_j|\sim L_j}\right\|_{[3,\R\times
\R]}}{<L_1>^{\frac 12}<L_2>^{\frac 12}<L_3>^{\frac
12-\delta}}\nonumber\\
&\lesssim &\sum_{L_1, L_2, L_3\gtrsim 1} \frac{L_{min}^{\frac
12}}{<L_1>^{\frac 12}<L_2>^{\frac 12}<L_3>^{\frac
12-\delta}}\nonumber\\
&\lesssim &\sum_{L_{min}, L_{med}, L_{max}\gtrsim 1}
\frac{1}{<L_{med}>^{\frac 12}<L_{max}>^{\frac 12-\delta}}\lesssim
1,\end{eqnarray} here we have used the estimate (without loss of
generality we assume $L_1\lesssim L_2\lesssim L_3$)
$$\left\|\Pi_{j=1}^3\chi_{|\xi_j|\lesssim 1}\chi_{|\lambda_1|\sim
L_1}\right\|_{[3,\R\times \R]}\lesssim
\left\|\chi_{|\xi_1|\lesssim 1}\left\|\chi_{|i\tau_1-h_1(\xi)|\sim
L_1}\right\|_{[3,\R]}\right\|_{[3,\R]}\lesssim L_1^{\frac 12}.$$
What remains is to estimate the term
\begin{equation}\left\|\frac{\xi_3 <\xi_1>^{\rho}<\xi_2>^{\rho}<\xi_3>^{-\rho}\chi_{\max\{|\xi_1|,|\xi_2|,|\xi_3|\}\gtrsim
1}}{<\lambda_1>^{\frac 12}<\lambda_2>^{\frac 12}<\lambda_3>^{\frac
12-\delta}}\right\|_{[3,\R\times \R]}.\label{3.15a}\end{equation}

 By taking the homogenous dyadic decomposition of the variable $|\xi_j|\sim N_j>0$,
by taking the non-homogenous dyadic decomposition of the variable
$|\lambda_j|\sim L_j\ge 1$, and the function $|h(\xi)|\sim H\ge 1$
( here the notation $|\lambda_j|\sim L_j=1$, $|h(\xi)|\sim H=1$
means $|\lambda_j|\le 1$, $|h(\xi)|\le 1$, respectively), we have
\begin{eqnarray}&{}&\left\|\frac{\xi_3 <\xi_1>^{\rho}<\xi_2>^{\rho}<\xi_3>^{-\rho}\chi_{\max\{|\xi_1|,|\xi_2|,|\xi_3|\}
\gtrsim 1}}{<\lambda_1>^{\frac 12}<\lambda_2>^{\frac
12}<\lambda_3>^{\frac
12-\delta}}\right\|_{[3,\R\times \R]}\nonumber\\
&\lesssim& \left\|\sum_{N_{max}\gtrsim 1}\sum_{L_1,L_2,L_3\ge
1}\sum_{H\ge 1}\frac{N_3
<N_1>^{\rho}<N_2>^{\rho}}{<N_3>^{\rho}L_1^{\frac 12}L_2^{\frac
12}L_3^{\frac
12-\delta}}X_{N_1,N_2,N_3;H;L_1,L_2,L_3}\right\|_{[3,\R\times
\R]},\label{2}\end{eqnarray} where $X_{N_1,N_2,N_3;H;L_1,L_2,L_3}$
is the multiplier
\begin{equation}X_{N_1,N_2,N_3;H;L_1,L_2,L_3}
:=\chi_{|h(\xi)|\thicksim H}\Pi_{j=1}^3\chi_{|\xi_j|\sim N_j}
\chi_{|\lambda_j|\sim L_j}.\nonumber\end{equation} From the
identities $\xi_1+\xi_2+\xi_3=0$ and $\tau_1+\tau_2+\tau_3=0$ we
see that
$$h(\xi)=-\lambda_1-\lambda_2-\lambda_3=3i\xi_1\xi_2\xi_3-(|\xi_1|^{2\alpha}+|\xi_2|^{2\alpha}+|\xi_3|^{2\alpha}).
$$ Then  the multiplier
$X_{N_1,N_2,N_3;H;L_1,L_2,L_3}$ vanishes unless
\begin{equation}N_{max}\sim N_{med}, L_{max}\sim \max\{H,L_{med}\}, H\sim \max\{N_{max}^2N_{min},N_{max}^{2\alpha}\}.
 \label{3}\end{equation}
Thus we may implicitly assume (\ref{3}) in the summations. By
applying Schur's test (Lemma 3.11 in \cite{Tao}),
\begin{eqnarray}(\ref{2})&\lesssim& \sup_{N\gtrsim 1}\left\| \sum_{N_{max}\sim N_{med}\sim N}\, \sum_{H\ge 1}
\right.\nonumber\\
&{}&\qquad\left. \sum_{L_{max}\sim \max\{H,L_{med}\}}\frac{N_3
<N_1>^{\rho}<N_2>^{\rho}}{<N_3>^{\rho}L_1^{\frac 12}L_2^{\frac
12}L_3^{\frac
12-\delta}}X_{N_1,N_2,N_3;H;L_1,L_2,L_3}\right\|_{[3,\R\times
\R]}.\label{5}\end{eqnarray} In light of (\ref{3}) and the
comparison principle in \cite{Tao},  we thus see that at least one
of the inequalities
\begin{equation}(\ref{5})\lesssim \sup_{N\gtrsim
1}\sum_{N_{max}\sim N_{med}\sim N} \, \sum_{L_{max}\gtrsim
L_{med}\gtrsim L_{min}}\frac{N_3
<N_1>^{\rho}<N_2>^{\rho}}{<N_3>^{\rho}L_1^{\frac 12}L_2^{\frac
12}L_3^{\frac 12-\delta}}\left\|X_{N_1,N_2,N_3; L_{max};
L_1,L_2,L_3}\right\|_{[3,R\times R]},\label{6}\end{equation} or
\begin{equation}(\ref{5})\lesssim \sup_{N\gtrsim 1}\sum_{ N_{max}\sim N_{med}\sim N}
\,\sum_{L_{max}\sim L_{med}}\,\sum_{H\ll L_{max}} \frac{N_3
<N_1>^{\rho}<N_2>^{\rho}}{<N_3>^{\rho}L_1^{\frac 12}L_2^{\frac
12}L_3^{\frac
12-\delta}}\left\|X_{N_1,N_2,N_3;H;L_1,L_2,L_3}\right\|_{[3,\R\times
\R]}\label{7}\end{equation} holds.  It is sufficient to prove
$(\ref{6})\lesssim 1$ and $(\ref{7})\lesssim 1$.

 {\bf The proof of
$(\ref{7})\lesssim 1$.}\quad Note that the inequality
$N_{max}^2N_{min}\ge N_{max}^{2\alpha}$ implies $N_{min}\ge
N_{max}^{2\alpha-2}$. When  $N_{min}\gtrsim 1$, by  using the
estimate (1) in Lemma \ref{L3.1}, we get from (\ref{2}) and
(\ref{3}),
\begin{eqnarray}(\ref{7})&\lesssim&\sup_{N\gtrsim 1}\sum_{ N_{max}\sim N_{med}\sim
N}\, \sum_{L_{max}\sim L_{med}}\nonumber\\
&{}&\qquad \sum_{
H\sim\max\{N_{max}^2N_{min},N_{max}^{2\alpha}\}\ll L_{max}}
\frac{N_3 <N_1>^{\rho}<N_2>^{\rho}}{<N_3>^{\rho}L_1^{\frac
12}L_2^{\frac 12}L_3^{\frac
12-\delta}}L_{min}^{\frac 12}N_{min}^{\frac 12}\nonumber\\
&\lesssim&\sup_{N\gtrsim 1}\sum_{ N_{max}\sim N_{med}\sim
N,N_{min}\gtrsim 1}\,\sum_{L_{max}\sim L_{med}}\,\sum_{H\sim
N_{max}^2N_{min}\ll L_{max}} \frac{N_{min}^{1-\rho}
N^{2\rho}}{L_{min}^{\frac
12}L_{max}^{1-\delta}}L_{min}^{\frac 12}N_{min}^{\frac 12}\nonumber\\
&\lesssim&\sup_{N\gtrsim 1}\sum_{ N_{max}\sim N_{med}\sim
N,N_{min}\gtrsim 1}\,\sum_{L_{max}\sim L_{med}\gtrsim
N^2N_{min}}\frac{N_{min}^{\frac 12-\rho+2\delta}
N^{2\rho-2+4\delta}}{L_{max}^{\delta}}log _2(L_{max})\nonumber\\
&\lesssim&\sup_{N\gtrsim 1}\sum_{ N_{max}\sim N_{med}\sim
N,N_{min}\gtrsim 1}N_{min}^{\frac 12-\rho+2\delta}
N^{2\rho-2+4\delta}\nonumber\\
&\lesssim&\sup_{N\gtrsim 1}N^{2\rho-2+4\delta}\lesssim
1,\label{3.60}\end{eqnarray} for $\frac 12<\rho<1$ and $\delta>0$
small.

When $N_1\sim N_2\gg N_3$ with  $N_{3}\lesssim 1$, by  using the
estimate (1) in Lemma \ref{L3.1}, we get from (\ref{2}) and
(\ref{3}),
\begin{eqnarray}(\ref{7})&\lesssim&\sup_{N\gtrsim 1}\sum_{ N_{1}
\sim N_{2}\sim N, N_3\lesssim 1}\, \sum_{L_{max}\sim
L_{med}}\,\sum_{ H\sim\max\{N^2N_{3},N^{2\alpha}\}\ll L_{max}}
\frac{N_3 <N_1>^{\rho}<N_2>^{\rho}}{<N_3>^{\rho}L_1^{\frac
12}L_2^{\frac
12}L_3^{\frac 12-\delta}}L_{min}^{\frac 12}N_{3}^{\frac 12}\nonumber\\
&\lesssim&\sup_{N\gtrsim 1}\sum_{ N_{1}\sim N_{2}\sim N,1\gtrsim
N_{3}\ge N^{2\alpha-2}}\,\sum_{L_{max}\sim L_{med}}\,\sum_{H\sim
N^2N_{3}\ll L_{max}} \frac{N_{3} N^{2\rho}}{L_{min}^{\frac
12}L_{max}^{1-\delta}}L_{min}^{\frac 12}N_{3}^{\frac 12}\nonumber\\
&{}&+\sup_{N\gtrsim 1}\sum_{ N_{1}\sim N_{2}\sim N,N_{3}\le
N^{2\alpha-2}}\,\sum_{L_{max}\sim L_{med}}\,\sum_{H\sim
 N^{2\alpha}\ll L_{max}} \frac{N_{3}
N^{2\rho}}{L_{min}^{\frac
12}L_{max}^{1-\delta}}L_{min}^{\frac 12}N_3^{\frac 12}\nonumber\\
&\lesssim&\sup_{N\gtrsim 1}\sum_{ N_{1}\sim N_{2}\sim N,1\gtrsim
N_{3}\ge N^{2\alpha-2}}\,\sum_{L_{max}\sim L_{med}\gtrsim
N^2N_{3}}\frac{N_{3}^{\frac 12+2\delta}
N^{2\rho-2+4\delta}}{L_{max}^{\delta}}log _2(L_{max})\nonumber\\
&{}&+\sup_{N\gtrsim 1}\sum_{ N_{1}\sim N_{2}\sim N,N_{3}\le
N^{2\alpha-2}}\,\sum_{L_{max}\sim L_{med}\gtrsim N^{2\alpha}}
\frac{N_{3}^{\frac 32}
N^{2\rho-2\alpha+2\alpha\delta}}{L_{max}^{\delta}}log _2(L_{max})\nonumber\\
&\lesssim&\sup_{N\gtrsim 1}\sum_{ N_{1}\sim N_{2}\sim N,1\gtrsim
N_{3}\ge N^{2\alpha-2}}N_{3}^{\frac 12+2\delta}
N^{2\rho-2+4\delta}\nonumber\\
&{}&\quad +\sup_{N\gtrsim 1}\sum_{ N_{1}\sim N_{2}\sim N,N_{3}\le
N^{2\alpha-2}}N_{3}^{\frac 32}
N^{2\rho-2\alpha+2\alpha\delta}\nonumber\\
&\lesssim&\sup_{N\gtrsim 1}N^{2\rho-2+4\delta}+\sup_{N\gtrsim 1}
N^{2\rho+\alpha-3+2\delta}\lesssim 1,\label{3.61}\end{eqnarray}
for $\delta>0$ small, since $\frac 12<\rho<1$ and $0\le\alpha\le
1$ imply
$$2\rho+\alpha-3+2\delta<0,\, 2\rho-2+4\delta<0$$ for $\delta>0$ small.

When $\alpha=1$ and $N_1\sim N_3\sim N\gg N_2$ with $N_{2}\lesssim
1$,  we have $N^{2\alpha}\ge N^2N_{min}$. By using the estimate
(1) in Lemma \ref{L3.1}, we get from (\ref{2}) and (\ref{3}),
\begin{eqnarray}(\ref{7})&\lesssim&\sup_{N\gtrsim 1}
\sum_{ N_{1}\sim N_{3}\sim N, N_2\lesssim 1}\,\sum_{L_{max}\sim
L_{med}}\, \sum_{ H\sim N^{2\alpha}\ll L_{max}}
\frac{N}{L_1^{\frac 12}L_2^{\frac 12}L_3^{\frac
12-\delta}}L_{min}^{\frac 12}N_{2}^{\frac 12}\nonumber\\
&\lesssim&\sup_{N\gtrsim 1}\sum_{N_{1}\sim N_{3}\sim N,
N_2\lesssim 1}\, \sum_{L_{max}\sim L_{med}\gg  N^{2}}
\frac{N}{L_1^{\frac 12}L_2^{\frac 12}L_3^{\frac
12-\delta}}L_{min}^{\frac 12}N_{2}^{\frac 12}\nonumber\\
&\lesssim&\sup_{N\gtrsim 1}\sum_{N_2\lesssim 1}\,
\sum_{L_{max}\sim L_{med}\gg  N^{2}}
 \frac{NN_2^{\frac
12}}{L_{med}^{\frac 12}L_{max}^{\frac 12-\delta}}\nonumber\\
&\lesssim&\sup_{N\gtrsim 1}\sum_{N_2\lesssim 1}\,
\sum_{L_{max}\sim L_{med}\gg  N^{2}}
 \frac{N^{4\delta-1}N_2^{\frac
12}}{L_{max}^{\delta}}\nonumber\\
&\lesssim& \sup_{N\gtrsim 1}\sum_{N_2\lesssim
1}N^{4\delta-1}N_2^{\frac 12}\lesssim 1\label{3.73}\end{eqnarray}
for $\delta>0$ small.

When $0\le\alpha<1$ and $N_1\sim N_3\sim N\gg N_2$ with
$N_{2}\lesssim 1$, by using the estimate (1) in Lemma \ref{L3.1},
we get from (\ref{2}) and (\ref{3}),
\begin{eqnarray}(\ref{7})&\lesssim&\sup_{N\gtrsim 1}\sum_{ N_{1}\sim N_{3}\sim
N}\,\sum_{L_{max}\sim L_{med}}\, \sum_{
H\sim\max\{N^2N_{2},N^{2\alpha}\}\ll L_{max}} \frac{N}{L_1^{\frac
12}L_2^{\frac 12}L_3^{\frac
12-\delta}}L_{min}^{\frac 12}N_{2}^{\frac 12}\nonumber\\
&\lesssim&\sup_{N\gtrsim 1}\sum_{N_{1}\sim N_{3}\sim N, N_2\ge
N^{2\alpha-2}}\, \sum_{L_{max}\sim L_{med}\gg  N^2N_{2}}
\frac{N}{L_1^{\frac 12}L_2^{\frac 12}L_3^{\frac
12-\delta}}L_{min}^{\frac 12}N_{2}^{\frac 12}\nonumber\\
&{}& + \sup_{N\gtrsim 1}\sum_{N_{1}\sim N_{3}\sim N, N_2\le
N^{2\alpha-2}}\, \sum_{L_{max}\sim L_{med}\gg  N^{2\alpha}}
\frac{N}{L_1^{\frac 12}L_2^{\frac 12}L_3^{\frac
12-\delta}}L_{min}^{\frac 12}N_{2}^{\frac 12}\nonumber\\
&\lesssim&\sup_{N\gtrsim 1}\sum_{N_2\ge N^{2\alpha-2}}\,
\sum_{L_{max}\sim L_{med}\gg  N^2N_{2}} \frac{NN_2^{\frac
12}}{L_{med}^{\frac 12}L_{max}^{\frac 12-\delta}}
\nonumber\\
&{}& + \sup_{N\gtrsim 1}\sum_{N_2\le N^{2\alpha-2}}\,
\sum_{L_{max}\sim L_{med}\gg  N^{2\alpha}}
 \frac{NN_2^{\frac
12}}{L_{med}^{\frac 12}L_{max}^{\frac 12-\delta}}\nonumber\\
&\lesssim&\sup_{N\gtrsim 1}\sum_{N_2\ge N^{2\alpha-2}}\,
\sum_{L_{max}\sim L_{med}\gg  N^2N_{2}}
\frac{N^{-1+4\delta}N_2^{-\frac 12+2\delta}}{L_{max}^{\delta}}
\nonumber\\
&{}& + \sup_{N\gtrsim 1}\sum_{N_2\le N^{2\alpha-2}}\,
\sum_{L_{max}\sim L_{med}\gg  N^{2\alpha}}
 \frac{N^{4\alpha\delta-2\alpha+1}N_2^{\frac
12}}{L_{max}^{\delta}}\nonumber\\
&\lesssim&\sup_{N\gtrsim 1}\sum_{N_2\ge N^{2\alpha-2}}
N^{-1+4\delta}N_2^{-\frac 12+2\delta}+ \sup_{N\gtrsim
1}\sum_{N_2\le N^{2\alpha-2}}N^{4\alpha\delta-2\alpha+1}N_2^{\frac
12}\nonumber\\
&\lesssim&\sup_{N\gtrsim 1} N^{-\alpha+4\alpha\delta}\lesssim
1\label{3.1e}\end{eqnarray} for $\delta>0$ small. By symmetric we
know the estimate  $(\ref{7})\lesssim 1$  holds when  $N_2\sim
N_3\sim N\gg N_1$ and $N_{1}\lesssim 1$.

{\bf The proof of $(\ref{6})\lesssim 1$.} \quad  We first deal
with the contribution where the case (2a)  in Lemma \ref{L3.1}
holds. In this case we have $N_1\sim N_2\sim N_3\sim N$,
$L_{max}\sim N^3$ and $L_{min}\gtrsim N^{2\alpha}$, since we have
$L_j\sim |\lambda_j|\ge |\xi_j|^{2\alpha}$. So we get
\begin{eqnarray}(\ref{6}) &\lesssim&\sup_{N\gtrsim 1}
\sum_{L_{med}\gtrsim N^{2\alpha}, L_{max}\sim
N^3}\frac{N^{1+\rho}}{L_1^{\frac 12}L_2^{\frac 12}L_3^{\frac
12-\delta}}L_{min}^{\frac 12}\min\{N^{-\frac 14}L_{med}^{\frac
14},L_{med}^{\frac
1{4\alpha}}\}\nonumber\\
&\lesssim&\sup_{N\gtrsim 1} \sum_{L_{med}\gtrsim N^{2\alpha},
L_{max}\sim N^3}\frac{N^{\frac 34+\rho}}{L_{med}^{\frac
14}L_{max}^{\frac 12-\delta}}\lesssim \sup_{N\gtrsim 1}
\sum_{L_{max}\sim N^3}\frac{N^{-\frac
34+\rho-\frac{\alpha}2+6\delta}}{L_{max}^{\delta}}\nonumber\\
&\lesssim&\sup_{N\gtrsim 1}N^{-\frac
34+\rho-\frac{\alpha}2+6\delta}\lesssim 1\label{3.2}
\end{eqnarray}
for $\delta>0$ small, since we have $\rho<\frac {3+2\alpha}4$.

Second,  we deal with the contribution where the case (2b) in
Lemma \ref{L3.1}  applies. We choose $\beta>0$ small in
(\ref{13}). We do not have perfect symmetry and must consider the
cases
$$\mbox{\bf{Case A:}}\quad N\sim N_1\sim
N_2\gg N_3\gtrsim 1;  H\sim L_3\gtrsim L_1,L_2,$$
$$\mbox{\bf{Case B:}}\quad N\sim N_1\sim
N_2\gg N_3, N_3\lesssim 1;  H\sim L_3\gtrsim L_1,L_2,$$
$$\mbox{\bf{Case C:}}\quad N\sim N_1\sim
N_3\gg N_2\gtrsim 1;  H\sim L_2\gtrsim L_1,L_3,$$
$$\mbox{\bf{Case D:}}\quad N\sim N_1\sim
N_3\gg N_2, N_2\lesssim 1;  H\sim L_2\gtrsim L_1,L_3,$$separately.

{\bf The estimate in Case A.} \quad In this case, we have
$L_{max}\sim N^2N_3$ and $L_{med}\gtrsim N^{2\alpha}$, and then
$N_3^{\frac 12}\ll N^{\frac 12}\le L_{med}^{\frac 1{2\alpha}}$.
When $0\le \alpha<1$, we have $N_3^{\beta+1}N^{2-\beta}\ge
N^{2\alpha}$  for $N_3\gtrsim 1$ and $\beta>0$ small.

When $L_{med}\ge N_3^{\beta+1}N^{2-\beta}\ge N^{2\alpha}$,  we get
from (\ref{13}) and (\ref{6}) that
\begin{eqnarray}(\ref{6}) &\lesssim&\sup_{N\gtrsim 1}\sum_{N\gg N_3\gtrsim 1}
\sum_{L_{max}\sim N^2N_3\gtrsim L_{med}\ge
N_3^{\beta+1}N^{2-\beta}}\frac{N_3^{1-\rho}N^{2\rho}}{L_1^{\frac
12}L_2^{\frac 12}L_3^{\frac 12-\delta}}L_{min}^{\frac
12}N_3^{\frac 12}\nonumber\\
&\lesssim&\sup_{N\gtrsim 1}\sum_{N\gg N_3\gtrsim 1}
\sum_{L_{max}\sim N^2N_3,L_{med}\ge
N_3^{\beta+1}N^{2-\beta}}\frac{N_3^{1-\rho+\delta}N^{2\rho-1+2\delta}}{L_{med}^{\frac
12}}\nonumber\\
&\lesssim&\sup_{N\gtrsim 1}\sum_{N\gg N_3\gtrsim 1}
\,\sum_{L_{med}\lesssim L_{max}\sim N^2N_3}\frac {N_3^{\frac
12-\frac{\beta}2-\rho+\delta(2+\beta)}
N^{2\rho-2+\frac{\beta}2+\delta(4-\beta)}}{L_{med}^{\delta}}\nonumber\\
&\lesssim&\sup_{N\gtrsim 1}\sum_{N\gg N_3\gtrsim 1} N_3^{\frac
12-\frac{\beta}2-\rho+\delta(2+\beta)}
N^{2\rho-2+\frac{\beta}2+\delta(4-\beta)}\nonumber\\
&\lesssim&\sup_{N\gtrsim
1}N^{2\rho-2+\frac{\beta}2+\delta(4-\beta)}\lesssim
1,\label{3.63}\end{eqnarray} for $\delta>0$ and $\beta>0$
 small, since the
inequality $\frac 12<\rho<1$ implies
$$2\rho-2+\frac{\beta}2+\delta(4-\beta)<0,\,\frac
12-\frac{\beta}2-\rho+\delta(2+\beta)<0 $$  for  $\delta>0$ and
$\beta>0$  small.

When $L_{med}\le N_3^{\beta+1}N^{2-\beta}$ and $L_{med}\gtrsim
N^{2\alpha}$, We get from (\ref{13}) and (\ref{6}) that
\begin{eqnarray}(\ref{6}) &\lesssim&\sup_{N\gtrsim 1}\sum_{ N_3\gtrsim 1} \sum_{L_{med}\le
N_3^{\beta+1}N^{2-\beta},L_{3}\sim
N^2N_3}\frac{N_3^{1-\rho}N^{2\rho}}{L_1^{\frac 12}L_2^{\frac
12}L_3^{\frac 12-\delta}}L_{min}^{\frac
12}N_3^{-\frac 1{2\beta}}N^{\frac{\beta-2}{2\beta}}L_{med}^{\frac 1{2\beta}}\nonumber\\
&\lesssim&\sup_{N\gtrsim 1}\sum_{ N_3\gtrsim 1} \sum_{L_{max}\sim
N^2N_3}\frac{N_3^{\frac 12-\rho+(2+\beta)\delta-\frac
{\beta}2}N^{2\rho-2+2\delta+\frac{\beta}2+(2-\beta)\delta}}{L_{med}^{\delta}}\nonumber\\
&\lesssim&\sup_{N\gtrsim 1}\sum_{N_3\gtrsim 1} N_3^{\frac
12-\rho+(2+\beta)\delta-\frac
{\beta}2}N^{2\rho-2+2\delta+\frac{\beta}2+(2-\beta)\delta}\lesssim
1\label{3.65}\end{eqnarray} for $\delta>0$ and $\beta>0$ small,
since the inequality $\frac 12<\rho<1$ means
$$\frac
12-\rho+(2+\beta)\delta-\frac {\beta}2<0,\,
2\rho-2+2\delta+\frac{\beta}2+(2-\beta)\delta<0$$  for  $\delta>0$
and $\beta>0$  small.

When $\alpha=1$, we must consider the case  $L_{med}\ge
N_3^{\beta+1}N^{2-\beta}$ , $L_{med}\gtrsim N^{2}$ and
$N_3^{\beta+1}N^{2-\beta}\le N^{2}$. We have $1\lesssim N_3\le
N^{\frac{\beta}{\beta+1}}$. We get from (\ref{13}) and (\ref{6})
that
\begin{eqnarray}(\ref{6}) &\lesssim&\sup_{N\gtrsim 1}
\sum_{N_3\le N^{\frac{\beta}{\beta+1}}, N_3\lesssim 1} \,\sum_{
L_{3}\sim N^2N_3,L_{med}\ge N^{2}}\frac{N_3N^{2\rho}}{L_1^{\frac
12}L_2^{\frac 12}L_3^{\frac 12-\delta}}L_{min}^{\frac
12}N_3^{\frac 12}\nonumber\\
&\lesssim&\sup_{N\gtrsim 1}\sum_{N_3\le N^{\frac{\beta}{\beta+1}}}
\sum_{L_{3}\sim N^2N_3,L_{med}\ge
N^{2}}\frac{N_3^{1+\delta}N^{2\rho-1+2\delta}}{L_{med}^{\frac
12}}\nonumber\\
&\lesssim&\sup_{N\gtrsim 1}\sum_{ N_3\le
N^{\frac{\beta}{\beta+1}}} \sum_{L_{min}\le
L_{med}}\frac{N_3^{1+\delta}N^{2\rho-2+4\delta}}{L_{med}^{\delta}}\nonumber\\
&\lesssim& \sup_{N\gtrsim 1}\sum_{N_3\le
N^{\frac{\beta}{\beta+1}}}
N_3^{1+\delta}N^{2\rho-2+4\delta}\nonumber\\
&\lesssim& \sup_{N\gtrsim
1}N^{\frac{\beta}{\beta+1}(1+\delta)+2\rho-2+4\delta} \lesssim
1\label{3.76} \end{eqnarray} for  $\delta>0$ and $\beta>0$  small,
since the inequality $0\le \rho <1$ implies
$$\frac{\beta}{\beta+1}(1+\delta)+2\rho-2+4\delta<0$$  for  $\delta>0$ and $\beta>0$
small. We complete the estimate in Case A.

 {\bf The estimate in Case B.} \quad In this case, we have $L_{max}\sim
N^2N_3$ and $L_{med}\gtrsim N^{2\alpha}$, and then $N_3^{\frac
12}\ll N^{\frac 12}\le L_{med}^{\frac 1{2\alpha}}$.

When $L_{med}\ge N_3^{\beta+1}N^{2-\beta}\ge N^{2\alpha}$, we have
$N_3\ge N^{\frac{2\alpha-2+\beta}{\beta+1}}$. By using
$N_3\lesssim 1$ we get from (\ref{13}) and (\ref{6}) that
\begin{eqnarray}(\ref{6}) &\lesssim&\sup_{N\gtrsim 1}\sum_{N_1\sim N_2\sim N,
N_3\lesssim 1} \sum_{L_{max}\sim N^2N_3\gtrsim L_{med}\ge
N_3^{\beta+1}N^{2-\beta}}\frac{N_3N^{2\rho}}{L_1^{\frac
12}L_2^{\frac 12}L_3^{\frac 12-\delta}}L_{min}^{\frac
12}N_3^{\frac 12}\nonumber\\
&\lesssim&\sup_{N\gtrsim 1}\sum_{N_3\lesssim 1} \sum_{L_{max}\sim
N^2N_3,L_{med}\ge
N_3^{\beta+1}N^{2-\beta}}\frac{N_3^{1+\delta}N^{2\rho-1+2\delta}}{L_{med}^{\frac
12}}\nonumber\\
&\lesssim&\sup_{N\gtrsim 1}\sum_{N_3\lesssim 1}
\,\sum_{L_{med}\lesssim L_{max}\sim N^2N_3}\frac {N_3^{\frac
12-\frac{\beta}2+\delta(2+\beta)}
N^{2\rho-2+\frac{\beta}2+\delta(4-\beta)}}{L_{med}^{\delta}}\nonumber\\
&\lesssim&\sup_{N\gtrsim 1}\sum_{N_3\lesssim 1} N_3^{\frac
12-\frac{\beta}2+\delta(2+\beta)}
N^{2\rho-2+\frac{\beta}2+\delta(4-\beta)}\nonumber\\
&\lesssim&\sup_{N\gtrsim 1}
N^{2\rho-2+\frac{\beta}2+\delta(4-\beta)}\lesssim
1,\label{3.66}\end{eqnarray} for $\delta>0$ and $\beta>0$
 small, since the
inequality $\frac 12<\rho<1$ implies
$$2\rho-2+\frac{\beta}2+\delta(4-\beta)<0,\, \frac
12-\frac{\beta}2+\delta(2+\beta)>0$$  for  $\delta>0$ and
$\beta>0$ small.

When $L_{med}\ge N_3^{\beta+1}N^{2-\beta}$ , $L_{med}\gtrsim
N^{2\alpha}$ and $N_3^{\beta+1}N^{2-\beta}\le N^{2\alpha}$, we
have $N_3\le N^{\frac{2\alpha-2+\beta}{\beta+1}}$. We get from
(\ref{13}) and (\ref{6}) that
\begin{eqnarray}(\ref{6}) &\lesssim&\sup_{N\gtrsim 1}\sum_{N_3\le N^{\frac{2\alpha-2+\beta}{\beta+1}}, N_3\lesssim 1}
\,\sum_{ L_{3}\sim N^2N_3,L_{med}\ge
N^{2\alpha}}\frac{N_3N^{2\rho}}{L_1^{\frac 12}L_2^{\frac
12}L_3^{\frac 12-\delta}}L_{min}^{\frac
12}N_3^{\frac 12}\nonumber\\
&\lesssim&\sup_{N\gtrsim 1}\sum_{N_3\le
N^{\frac{2\alpha-2+\beta}{\beta+1}}} \sum_{L_{3}\sim
N^2N_3,L_{med}\ge
N^{2\alpha}}\frac{N_3^{1+\delta}N^{2\rho-1+2\delta}}{L_{med}^{\frac
12}}\nonumber\\
&\lesssim&\sup_{N\gtrsim 1}\sum_{ N_3\le
N^{\frac{2\alpha-2+\beta}{\beta+1}}} \sum_{L_{min}\le
L_{med}}\frac{N_3^{1+\delta}N^{2\rho-1+2\delta-(1-2\delta)\alpha}}{L_{med}^{\delta}}\nonumber\\
&\lesssim& \sup_{N\gtrsim 1}\sum_{N_3\le
N^{\frac{2\alpha-2+\beta}{\beta+1}}}
N_3^{1+\delta}N^{2\rho-1+2\delta-(1-2\delta)\alpha}\nonumber\\
&\lesssim& \sup_{N\gtrsim
1}N^{\frac{2\alpha-2+\beta}{\beta+1}(1+\delta)+2\rho-1+2\delta-(1-2\delta)\alpha}
\lesssim 1\label{3.67} \end{eqnarray} for  $\delta>0$ and
$\beta>0$  small, since the inequality $\frac 12<\rho<1$ implies
$$\frac{2\alpha-2+\beta}{\beta+1}(1+\delta)+2\rho-1+2\delta-(1-2\delta)\alpha<0$$  for  $\delta>0$ and $\beta>0$
 small.

When $L_{med}\le N_3^{\beta+1}N^{2-\beta}$ and $L_{med}\gtrsim
N^{2\alpha}$, we have $1\gtrsim N_3\ge
N^{\frac{2\alpha-2+\beta}{\beta+1}}$. We get from (\ref{13}) and
(\ref{6}) that
\begin{eqnarray}(\ref{6}) &\lesssim&\sup_{N\gtrsim 1}
\sum_{N_3\gtrsim 1 } \sum_{L_{med}\le
N_3^{\beta+1}N^{2-\beta},L_{3}\sim
N^2N_3}\frac{N_3N^{2\rho}}{L_1^{\frac 12}L_2^{\frac 12}L_3^{\frac
12-\delta}}L_{min}^{\frac
12}N_3^{-\frac 1{2\beta}}N^{\frac{\beta-2}{2\beta}}L_{med}^{\frac 1{2\beta}}\nonumber\\
&\lesssim&\sup_{N\gtrsim 1}\sum_{N_3\gtrsim 1} \sum_{L_{max}\sim
N^2N_3}\frac{N_3^{\frac 12+(2+\beta)\delta-\frac
{\beta}2}N^{2\rho-2+2\delta+\frac{\beta}2+(2-\beta)\delta}}{L_{med}^{\delta}}\nonumber\\
&\lesssim&\sup_{N\gtrsim 1}\sum_{N_3\gtrsim 1} N_3^{\frac
12+(2+\beta)\delta-\frac
{\beta}2}N^{2\rho-2+2\delta+\frac{\beta}2+(2-\beta)\delta}\nonumber\\
&\lesssim&\sup_{N\gtrsim
1}N^{2\rho-2+2\delta+\frac{\beta}2+(2-\beta)\delta} \lesssim
1\label{3.68}\end{eqnarray} for $\delta>0$ and $\beta>0$ small,
since the inequality $\frac 12<\rho<1$ means
$$2\rho-2+2\delta+\frac{\beta}2+(2-\beta)\delta<0,\, \frac
12+(2+\beta)\delta-\frac {\beta}2>0 $$  for  $\delta>0$ and
$\beta>0$  small. We complete the estimate in Case B.

{\bf The estimate in Case C.} \quad In this case, we have
$L_{max}\sim N^2N_2$ and $L_{med}\gtrsim N^{2\alpha}$, and then
$N_2^{\frac 12}\le N^{\frac 12}\le L_{med}^{\frac 1{2\alpha}}$. We
get from (\ref{13})   and (\ref{6}) that
\begin{eqnarray}(\ref{6}) &\lesssim&\sup_{N\gtrsim 1}\sum_{N\sim N_1\sim N_3\gg N_2\gtrsim 1}
\sum_{L_{1}, L_3\le L_{2}\sim
N^2N_2}\frac{N_3^{1-\rho}N_1^{\rho}N_2^{\rho}}{L_1^{\frac
12}L_2^{\frac 12}L_3^{\frac
12-\delta}}\left\|X_{N_1,N_2,N_3;L_{max};L_1,L_2,L_3}\right\|_{[3,\R\times
\R]}\nonumber\\
&\lesssim&\sup_{N\gtrsim 1}\sum_{N\sim N_1\sim N_3\gg N_2\gtrsim
1} \sum_{L_{1}, L_3\le L_{2}\sim
N^2N_2}\frac{N_3^{\rho}N_1^{\rho}N_2^{1-\rho}}{L_1^{\frac
12}L_2^{\frac 12-\delta}L_3^{\frac
12}}\left\|X_{N_1,N_2,N_3;L_{max};L_1,L_2,L_3}\right\|_{[3,\R\times
\R]}.\nonumber\end{eqnarray}
 By symmetry and the estimate obtained in Case A we get
\begin{equation}(\ref{6}) \lesssim \sup_{N\gtrsim 1}\sum_{N\sim N_1\sim N_3\gg N_2}
\sum_{L_{1}, L_3\le L_{2}\sim
N^2N_2}\frac{N_3^{1-\rho}N_1^{\rho}N_2^{\rho}}{L_1^{\frac
12}L_2^{\frac 12}L_3^{\frac
12-\delta}}\left\|X_{N_1,N_2,N_3;H;L_1,L_2,L_3}\right\|_{[3,R\times
R]}\lesssim 1. \label{21a}\end{equation}
 We complete the estimate in Case C.

 {\bf The estimate in Case D.} \quad In this case, we have $L_{max}=L_{2}\sim
N^2N_2$ and $L_{med}\gtrsim N^{2\alpha}$, and then $\alpha<1$. We
get from (\ref{13}) and (\ref{6}) that
\begin{eqnarray}(\ref{6}) &\lesssim&\sup_{N\gtrsim 1}\sum_{N_1\sim N_3\sim N, N_2\lesssim 1}
\,\sum_{L_{max}\sim N^2N_2,L_{med}\ge N^{2\alpha}}
\frac{N}{L_1^{\frac 12}L_2^{\frac 12}L_3^{\frac
12-\delta}}L_{min}^{\frac
12}N_2^{\frac 12}\nonumber\\
&\lesssim&\sup_{N\gtrsim 1}\sum_{N_1\sim N_3\sim N,N_2\lesssim 1}
\,\sum_{L_{max}\sim N^2N_2,L_{med}\ge N^{2\alpha}}
\frac{NN_2^{\frac
12}}{L_{max}^{\frac 12}L_{med}^{\frac 12-\delta}}\nonumber\\
&\lesssim&\sup_{N\gtrsim 1}\sum_{N_1\sim N_3\sim N, N_2\lesssim 1}
\,\sum_{L_{max}\sim
N^2N_2,L_{med}\ge N^{2\alpha}} \frac{L_{max}^{\delta}}{L_{med}^{\frac 12}}\nonumber\\
&\lesssim&\sup_{N\gtrsim 1}\sum_{N_1\sim N_3\sim N, N_2\lesssim 1}
\,\sum_{L_{max}\sim
N^2N_2,L_{med}\ge N^{2\alpha}} \frac{N_2^{\delta}N^{-\alpha+2\delta(1+\alpha)}}{L_{med}^{\delta}}\nonumber\\
&\lesssim&\sup_{N\gtrsim 1}\sum_{N_2\lesssim
1}N_2^{\delta}N^{-\alpha+2\delta(1+\alpha)} \lesssim
\sup_{N\gtrsim 1} N^{-\alpha+2\delta(1+\alpha)}\lesssim
1,\label{19b} \end{eqnarray} for  $\delta>0$  small, since we have
$0\le\alpha<1$ in this case. We complete the estimate where the
case (2b) in Lemma \ref{L3.1} applies.

To finish the estimate of (\ref{6}) it remains to deal with the
case where (2C) in Lemma \ref{L3.1} holds.  When
$N_{min}=N_3\gtrsim 1$, we have $L_3\ll L_{max}$ and $N^2N_3\ge
N^{2\alpha}$, and then
\begin{eqnarray}(\ref{6}) &\lesssim&\sup_{N\gtrsim
1}\sum_{N_{med}\sim N_{max}\sim N,N_{min}\gtrsim 1}\,\sum_{L_3\ll
L_{max}\sim N^2N_{min}}\frac{N_{min}^{1-\rho}N^{2\rho}}{L_1^{\frac
12}L_2^{\frac 12}L_3^{\frac 12-\delta}}L_{min}^{\frac
12}N^{-1}L_{med}^{\frac 12}\nonumber\\
&\lesssim&\sup_{N\gtrsim 1}\sum_{N_{med}\sim N_{max}\sim N,
N_{min}\gtrsim 1}\,\sum_{L_{max}\sim
N^2N_{min}}\frac{N_{min}^{1-\rho}N^{2\rho-1}}{L_{max}^{\frac
12}L_{min}^{\frac 12}L_{med}^{\frac 12-\delta}}L_{min}^{\frac
12}N^{-1}L_{med}^{\frac 12}\nonumber\\
&\lesssim&\sup_{N\gtrsim 1}\sum_{N_{med}\sim N_{max}\sim
N,N_{min}\gtrsim 1}\,\sum_{L_{max}\sim
N^2N_{min}}\frac{N_{min}^{1-\rho}N^{2\rho}}{L_{max}^{\frac
12}L_{max}^{\frac 12-\delta}}\nonumber\\
&\lesssim&\sup_{N\gtrsim 1}\sum_{N_{med}\sim N_{max}\sim N,
N_{min}\gtrsim 1}\,\sum_{L_{max}\sim
N^2N_{min}}\frac{N_{min}^{\frac
12-\rho+2\delta}N^{2\rho-2+4\delta}}{L_{max}^{\delta}}\nonumber\\
&\lesssim&\sup_{N\gtrsim 1}\sum_{N_{med}\sim N_{max}\sim N,
N_{min}\gtrsim 1}N_{min}^{\frac
12-\rho+2\delta}N^{2\rho-2+4\delta}\nonumber\\
&\lesssim&\sup_{N\gtrsim 1}N^{2\rho-2+4\delta}\lesssim
1,\label{3.69}\end{eqnarray} for $\delta>0$  small.

 When $N_{min}=N_3\lesssim 1$, we have $L_3\ll
L_{max}$, and
\begin{eqnarray}(\ref{6}) &\lesssim&\sup_{N\gtrsim
1}\sum_{N_{med}\sim N_{max}\sim N, N_{min}\lesssim
1}\,\sum_{L_3\ll L_{max}\sim \max\{N^2N_{min},
N^{2\alpha}\}}\frac{N_{min}N^{2\rho}}{L_1^{\frac 12}L_2^{\frac
12}L_3^{\frac 12-\delta}}L_{min}^{\frac
12}N^{-1}L_{med}^{\frac 12}\nonumber\\
&\lesssim&\sup_{N\gtrsim 1}\sum_{N_{med}\sim N_{max}\sim N,
N_{min}\lesssim 1}\,\sum_{L_{max}\sim \max\{N^2N_{min},
N^{2\alpha}\}}\frac{N_{min}N^{2\rho-1}}{L_{max}^{\frac
12}L_{min}^{\frac 12}L_{med}^{\frac 12-\delta}}L_{min}^{\frac
12}N^{-1}L_{med}^{\frac 12}\nonumber\\
&\lesssim&\sup_{N\gtrsim 1}\sum_{N_{med}\sim N_{max}\sim N,
N_{min}\lesssim 1}\,\sum_{L_{max}\sim \max\{N^2N_{min},
N^{2\alpha}\}}\frac{N_{min}N^{2\rho}}{L_{max}^{\frac
12}L_{max}^{\frac 12-\delta}}\nonumber\\
&\lesssim&\sup_{N\gtrsim 1}\sum_{N_{med}\sim N_{max}\sim N,
1\gtrsim N_{min}\ge N^{2\alpha-2}}\,\sum_{L_{max}\sim
N^2N_{min}}\frac{N_{min}N^{2\rho}}{L_{max}^{\frac
12}L_{max}^{\frac 12-\delta}}\nonumber\\
&{}& +\sup_{N\gtrsim 1}\sum_{N_{med}\sim N_{max}\sim N, N_{min}\le
N^{2\alpha-2}}\,\sum_{L_{max}\sim
N^{2\alpha}}\frac{N_{min}N^{2\rho}}{L_{max}^{\frac
12}L_{max}^{\frac 12-\delta}}\nonumber\\
&\lesssim&\sup_{N\gtrsim 1}\sum_{N_{med}\sim N_{max}\sim N,
1\gtrsim N_{min}\ge N^{2\alpha-2}}\,\sum_{L_{max}\sim
N^2N_{min}}\frac{N_{min}^{\frac
12+2\delta}N^{2\rho-2+4\delta}}{L_{max}^{\delta}}\nonumber\\
&{}& +\sup_{N\gtrsim 1}\sum_{N_{med}\sim N_{max}\sim N, N_{min}\le
N^{2\alpha-2}}\,\sum_{L_{max}\sim
N^{2\alpha}}\frac{N_{min}N^{2\rho-1-\alpha+2\alpha\delta}}{L_{max}^{\delta}}\nonumber\\
&\lesssim&\sup_{N\gtrsim 1}\sum_{N_{med}\sim N_{max}\sim N,
N_{min}\ge N^{2\alpha-2}}N_{min}^{\frac
12-\rho+2\delta}N^{2\rho-2+4\delta}\nonumber\\
&{}& +\sup_{N\gtrsim 1}\sum_{N_{med}\sim N_{max}\sim N, N_{min}\le
N^{2\alpha-2}}N_{min}N^{2\rho-1-\alpha+2\alpha\delta}\nonumber\\
&\lesssim&\sup_{N\gtrsim 1}N^{2\rho-2+4\delta}+\sup_{N\gtrsim
1}N^{2\rho-3+\alpha+2\alpha\delta}\lesssim
1,\label{3.70}\end{eqnarray} for $\delta>0$  small, since the
inequalities $\frac 12<\rho<1$  and $0\le\alpha\le 1$ imply
$$2\rho-3+\alpha+2\alpha\delta<0$$ for  $\delta>0$ small.

When $N_{min}=N_2\gtrsim 1$, we have $L_2\ll L_{max}$ and
$N^2N_{min}\gtrsim N^{2\alpha}$, and then
\begin{eqnarray}(\ref{6}) &\lesssim&\sup_{N\gtrsim
1}\sum_{N_{med}\sim N_{max}\sim N,N_{min}\gtrsim 1}\,\sum_{L_2\ll
L_{max}\sim N^2N_{min}}\frac{N_{min}^{\rho}N}{L_{min}^{\frac
12}L_{med}^{\frac 12}L_{max}^{\frac 12-\delta}}L_{min}^{\frac
12}N^{-1}L_{med}^{\frac 12}\nonumber\\
&\lesssim&\sup_{N\gtrsim 1}\sum_{N_{med}\sim N_{max}\sim
N,N_{min}\gtrsim 1}\,\sum_{L_2\ll L_{max}\sim
N^2N_{min}}\frac{N_{min}^{\rho}}{L_{max}^{\frac
12-\delta}}\nonumber\\
&\lesssim&\sup_{N\gtrsim 1}\sum_{N_{med}\sim N_{max}\sim N,
N_{min}\gtrsim 1}\,\sum_{L_{max}\sim
N^2N_{min}}\frac{N_{min}^{\rho-\frac
12+2\delta}N^{-1+2\delta}}{L_{max}^{\delta}}\nonumber\\
&\lesssim&\sup_{N\gtrsim 1}\sum_{N_{med}\sim N_{max}\sim N,
N_{min}\gtrsim 1}N_{min}^{\rho-\frac
12+2\delta}N^{-1+2\delta}\nonumber\\
&\lesssim&\sup_{N\gtrsim 1} N^{-1+2\delta} \lesssim
1,\label{3.71}\end{eqnarray} for $\delta>0$  small.

When $N_{min}=N_2\lesssim 1$, we have $L_2\ll L_{max}$. Note that
$N^{-1}L_{med}^{\frac 12}\le N_2^{\frac 12}$ implies $L_{med}\le
N^2N_2$. We get from (\ref{14}) and (\ref{6}) that
\begin{eqnarray}(\ref{6}) &\lesssim&\sup_{N\gtrsim
1}\sum_{N_{1}\sim N_{3}\sim N,N_2\lesssim 1}\,\sum_{L_2\ll
L_{max}\sim \max\{N^2N_{2}, N^{2\alpha}\}}\frac{N}{L_{1}^{\frac
12}L_{2}^{\frac 12}L_{3}^{\frac 12-\delta}}L_{2}^{\frac
12}\min\{N^{-1}L_{med}^{\frac 12}, N_2^{\frac 12}\}\nonumber\\
&\lesssim&\sup_{N\gtrsim 1}\sum_{N_{1}\sim N_{3}\sim N,N_2\lesssim
1}\,\sum_{L_2\ll L_{max}\sim \max\{N^2N_{2},
N^{2\alpha}\},L_{med}\le N^2N_2 }\frac{N}{L_{1}^{\frac
12}L_{2}^{\frac 12}L_{3}^{\frac 12-\delta}}L_{2}^{\frac
12}N^{-1}L_{med}^{\frac 12}\nonumber\\
&{}& +\sup_{N\gtrsim 1}\sum_{N_{1}\sim N_{3}\sim N,N_2\lesssim
1}\,\sum_{L_2\ll L_{max}\sim \max\{N^2N_{2},
N^{2\alpha}\},L_{med}\ge N^2N_2 }\frac{N}{L_{1}^{\frac
12}L_{2}^{\frac 12}L_{3}^{\frac 12-\delta}}L_{2}^{\frac
12}N_2^{\frac 12}\nonumber\\
&\lesssim&\sup_{N\gtrsim 1}\sum_{N_{1}\sim N_{3}\sim N,N_2\lesssim
1}\,\sum_{L_2\ll L_{max}\sim \max\{N^2N_{2},
N^{2\alpha}\},L_{med}\le N^2N_2
}\frac{N^{2\delta}N_2^{\delta}}{L_{max}^{\frac
12-\delta}L_{med}^{\delta}}\nonumber\\
&{}& +\sup_{N\gtrsim 1}\sum_{N_{1}\sim N_{3}\sim N,N_2\lesssim
1}\,\sum_{L_2\ll L_{max}\sim \max\{N^2N_{2},
N^{2\alpha}\},L_{med}\ge N^2N_2 }\frac{NN_2^{\frac
12}}{L_{max}^{\frac 12-\delta}L_{med}^{\delta}}\nonumber\\
&\lesssim &\sup_{N\gtrsim 1}\sum_{N_2\ge N^{2\alpha-2}}\,\sum_{
L_{max}\sim N^2N_{2},L_{med}\le N^2N_2
}\frac{N^{2\delta}N_2^{\delta}}{L_{max}^{\frac
12-\delta}L_{med}^{\delta}}\nonumber\\
&{}& +\sup_{N\gtrsim 1}\sum_{N_2\le N^{2\alpha-2}}\,\sum_{
L_{max}\sim N^{2\alpha},L_{med}\le N^2N_2
}\frac{N^{2\delta}N_2^{\delta}}{L_{max}^{\frac
12-\delta}L_{med}^{\delta}}\nonumber\\
 &{}& +\sup_{N\gtrsim 1}\sum_{N_1
\sim N_{3}\sim N,N_2\lesssim 1}\,\sum_{L_2\ll L_{max}\sim
\max\{N^2N_{2}, N^{2\alpha}\},L_{med}\ge N^2N_2 }\frac{NN_2^{\frac
12}}{L_{max}^{\frac 12-\delta}L_{med}^{\delta}}\nonumber\\
&\lesssim&\sup_{N\gtrsim 1}\sum_{N_2\ge
N^{2\alpha-2}}N^{4\delta-1}N_2^{2\delta-\frac 12}+\sup_{N\gtrsim
1}\sum_{N_2\le
N^{2\alpha-2}}N^{-\alpha+2\delta(\alpha+1)}N_2^{\delta}\nonumber\\
 &{}& +\sup_{N\gtrsim 1}\sum_{N_2\lesssim N^{2\alpha-2}}\,\sum_{L_{max}\sim
N^{2\alpha},L_{med}\ge N^2N_2
}\frac{N^{2\delta}N_2^{\delta}}{L_{max}^{\frac
12-\delta}L_{med}^{\delta}}\nonumber\\
&\lesssim&\sup_{N\gtrsim
1}N^{-\alpha+4\alpha\delta}+\sup_{N\gtrsim 1}\sum_{N_2\lesssim
N^{2\alpha-2}}N^{-\alpha+2\delta(1+\alpha)}N_2^{\delta}\nonumber\\
&\lesssim&\sup_{N\gtrsim 1}N^{-\alpha+4\alpha\delta} \lesssim
1,\label{3.4b}\end{eqnarray} for $\delta>0$  small. By symmetry,
the same estimate  holds when   $N_{min}=N_3$. We complete
 the proof of $(\ref{6})\lesssim 1$.\end{proof}

\begin{theorem}\label{T3.1}Given $s\in (-\min\{\frac
{3+2\alpha}4,1\},-\frac 12)$,  there exists $\mu>0$, $\delta>0$
such that for any  $u, v\in {X}_{\alpha }^{\frac{1}{2},s}$ with
compact support in $[-T, T]$,
\begin{equation} \left\|\partial_x(uv)\right\|_{ {X}_{\alpha
}^{-\frac{1}{2}+\delta,s}}\lesssim T^{\mu}\left\|u\right\|_{
{X}_{\alpha }^{\frac{1}{2},s}}\left\|v\right\|_{ {X}_{\alpha
}^{\frac{1}{2},s}}.\label{3.6}\end{equation}\end{theorem}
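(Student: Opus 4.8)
The plan is to dualize the bilinear estimate into a trilinear form and recognize the resulting multiplier as exactly the one already controlled in Lemma \ref{L3.2}. Writing $\rho=-s$, the hypothesis $s\in(-\min\{\frac{3+2\alpha}4,1\},-\frac12)$ is equivalent to $\rho\in(\frac12,\min\{\frac{3+2\alpha}4,1\})$, which is precisely the admissible range in Lemma \ref{L3.2}. Since the dual of $X_\alpha^{-\frac12+2\delta,s}$ under the $L^2(\R^2)$ pairing is $X_\alpha^{\frac12-2\delta,-s}$ (the weight $<i(\tau-\xi^3)+|\xi|^{2\alpha}>=(1+(\tau-\xi^3)^2+|\xi|^{4\alpha})^{1/2}$ is real and unaffected by conjugation), I would first estimate $\|\partial_x(uv)\|_{X_\alpha^{-\frac12+2\delta,s}}$ by testing against $w$ with $\|w\|_{X_\alpha^{\frac12-2\delta,-s}}=1$.

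Passing to the space–time Fourier transform and using $\xi_1+\xi_2+\xi_3=0$, $\tau_1+\tau_2+\tau_3=0$, the pairing $\int\partial_x(uv)\,\bar w$ becomes a trilinear integral over $\Gamma_3(\R\times\R)$. Substituting $f_1=<\lambda_1>^{\frac12}<\xi_1>^s\hat u$, $f_2=<\lambda_2>^{\frac12}<\xi_2>^s\hat v$ and $f_3=<\lambda_3>^{\frac12-2\delta}<\xi_3>^{-s}\hat w$ (so that $\|f_1\|_{L^2}=\|u\|_{X_\alpha^{1/2,s}}$, $\|f_2\|_{L^2}=\|v\|_{X_\alpha^{1/2,s}}$ and $\|f_3\|_{L^2}=\|w\|_{X_\alpha^{1/2-2\delta,-s}}=1$), the kernel of this integral is exactly
$$\frac{\xi_3\,<\xi_1>^{\rho}<\xi_2>^{\rho}<\xi_3>^{-\rho}}{<\lambda_1>^{\frac12}<\lambda_2>^{\frac12}<\lambda_3>^{\frac12-2\delta}},$$
where the factor $\xi_3=-(\xi_1+\xi_2)$ is produced by $\partial_x$ acting on $uv$. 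By the definition of the $[3,\R\times\R]$-norm and Lemma \ref{L3.2} applied with $\rho=-s$ and small parameter $2\delta$ (still admissible), this multiplier has $[3,\R\times\R]$-norm $\lesssim1$, yielding $\|\partial_x(uv)\|_{X_\alpha^{-\frac12+2\delta,s}}\lesssim\|u\|_{X_\alpha^{1/2,s}}\|v\|_{X_\alpha^{1/2,s}}$. I note the harmless points that $\xi_3$ may be replaced by $|\xi_3|$ without changing the multiplier norm, and that the low-frequency region $|\xi_j|\lesssim1$ needs no separate treatment since it is already incorporated in (\ref{0}).

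To produce the gain $T^\mu$, I would then exploit the compact time support of $u,v$: $\partial_x(uv)$ is supported in $[-T,T]$, so inserting a cutoff $\psi(t/2T)$ equal to $1$ there and invoking the time-localization estimate for the dissipative Bourgain spaces, namely $\|\psi(\cdot/2T)F\|_{X_\alpha^{b',s}}\lesssim T^{b-b'}\|F\|_{X_\alpha^{b,s}}$ for $-\frac12<b'<b<\frac12$, with $b'=-\frac12+\delta$ and $b=-\frac12+2\delta$, gives $\|\partial_x(uv)\|_{X_\alpha^{-\frac12+\delta,s}}\lesssim T^{\delta}\|\partial_x(uv)\|_{X_\alpha^{-\frac12+2\delta,s}}$. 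Combining with the previous bound establishes (\ref{3.6}) with $\mu=\delta$. Since the entire dyadic case analysis is already absorbed into Lemmas \ref{L3.1} and \ref{L3.2}, the only genuinely new ingredient is this localization step, and the one obstacle there is that the weight $<i(\tau-\xi^3)+|\xi|^{2\alpha}>$ is not the purely dispersive $<\tau-\xi^3>$; however, it dominates $<\tau-\xi^3>$ and differs only by the real dissipative part $|\xi|^{2\alpha}$, so the localization estimate carries over by the argument of Molinet and Ribaud \cite{Monlinet}.
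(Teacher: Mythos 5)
Your proposal is correct and follows essentially the same route as the paper: the paper's proof is exactly the duality reduction to the trilinear multiplier of Lemma \ref{L3.2} (with $\rho=-s$), combined with the time-localization estimate for compactly supported functions (Lemma 4 in \cite{Monlinet}, adapted to the weight $<i(\tau-\xi^3)+|\xi|^{2\alpha}>$) to produce the factor $T^{\mu}$. Your write-up merely makes explicit the substitutions $f_j$, the conjugation-invariance of the weight, and the split $\delta$ versus $2\delta$ that the paper leaves implicit.
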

\begin{proof} By duality, (\ref{3.6}) is equivalent
to,  for all $w\in  {X}_{\alpha }^{\frac{1}{2}-\delta,s}$,
\begin{equation}\left |<\partial_x(uv), w>\right |\lesssim
T^{\mu}\left\|u\right\|_{ {X}_{\alpha
}^{\frac{1}{2},s}}\left\|v\right\|_{ {X}_{\alpha
}^{\frac{1}{2},s}}\left\|w\right\|_{ {X}_{\alpha
}^{\frac{1}{2},s-\delta}}.\label{3.7}\end{equation} Then the
theorem follows from Lemma 4 in \cite{Monlinet}, (\ref{3.7}) and
 Lemma \ref{L3.2}.\end{proof}

The following theorem is a direct consequence of Theorem
\ref{T3.1} together with the triangle inequality
$$<\xi>^s\le
<\xi>^{s_c}<\xi_1>^{s-s_c}+<\xi>^{s_c}<\xi-\xi_1>^{s-s_c},\,
\forall s\ge s_c.$$

\begin{theorem}\label{T3.2}Given $s_c\in (
-\min\{\frac {3+2\alpha}4,1\},-\frac 12)$,  there  exists $\mu>0$,
$\delta>0$ such that for any $s\ge s_c$ and for any couple
$(u,v)\in {X}_{\alpha }^{\frac{1}{2},s}$ with compact support in
$[-T, T]$,
\begin{equation} \left\|\partial_x(uv)\right\|_{ {X}_{\alpha
}^{-\frac{1}{2}+\delta,s}}\lesssim T^{\mu}\left
(\left\|u\right\|_{ {X}_{\alpha
}^{\frac{1}{2},s_c}}\left\|v\right\|_{ {X}_{\alpha
}^{\frac{1}{2},s}}+\left\|u\right\|_{ {X}_{\alpha
}^{\frac{1}{2},s}}\left\|v\right\|_{ {X}_{\alpha
}^{\frac{1}{2},s_c}}\right
).\label{3.6a}\end{equation}\end{theorem}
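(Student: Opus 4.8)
The plan is to run the argument exactly as in the proof of Theorem \ref{T3.1}, the only change being that the output is measured at the general regularity $s$ while the two inputs carry the mixed regularities $s_c$ and $s$. By duality it suffices to estimate the trilinear form $|<\partial_x(uv),w>|$ against $\|w\|_{X_{\alpha}^{\frac12-\delta,-s}}$, and, following the reduction carried out for Theorem \ref{T3.1} (duality and Lemma 4 in \cite{Monlinet}, which extracts the factor $T^{\mu}$ from the fact that $u,v$ are supported in $[-T,T]$), the matter reduces to a bound on a $[3,\R\times\R]$-multiplier. Writing $\xi_1,\xi_2,\xi_3$ for the three frequencies on $\Gamma_3$, the output frequency equals $-\xi_3$, so the output weight is $<\xi_3>^{s}$; the single new feature relative to Theorem \ref{T3.1} is precisely this factor $<\xi_3>^{s}$ in place of $<\xi_3>^{s_c}$.

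To dispose of it I would insert the displayed triangle inequality with $\xi=-\xi_3=\xi_1+\xi_2$ and $\xi-\xi_1=\xi_2$, namely
\[<\xi_3>^{s}\le <\xi_3>^{s_c}<\xi_1>^{s-s_c}+<\xi_3>^{s_c}<\xi_2>^{s-s_c},\]
which is legitimate since $s-s_c\ge 0$, and split the form into two pieces. In the first piece the extra factor $<\xi_1>^{s-s_c}$ converts the $s_c$-weight on $u$ into an $s$-weight, so that $u$ is measured in $X_{\alpha}^{\frac12,s}$ and $v$ in $X_{\alpha}^{\frac12,s_c}$; in the second piece the roles of $u$ and $v$ are exchanged. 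The point is that after this redistribution the remaining multiplier in \emph{each} piece is exactly
\[\frac{\xi_3<\xi_1>^{-s_c}<\xi_2>^{-s_c}<\xi_3>^{s_c}}{<\lambda_1>^{\frac12}<\lambda_2>^{\frac12}<\lambda_3>^{\frac12-\delta}},\]
i.e. the multiplier of Lemma \ref{L3.2} with $\rho=-s_c$; note that this is independent of $s$, which is why the estimate holds for all $s\ge s_c$ at once. Since $s_c\in(-\min\{\frac{3+2\alpha}4,1\},-\frac12)$ forces $-s_c\in(\frac12,\min\{\frac{3+2\alpha}4,1\})$, Lemma \ref{L3.2} applies and bounds each piece by $1$; the two pieces then produce the two products on the right-hand side of (\ref{3.6a}), with the factor $T^{\mu}$ carried along.

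The step requiring care — and the main obstacle — is the temptation to prove (\ref{3.6a}) by applying Theorem \ref{T3.1} directly to the spatially filtered functions $<D_x>^{s-s_c}u$ and $v$. This does not work: the triangle inequality is an inequality between Fourier symbols, so exploiting it forces one to pass to the absolute values $|\hat u|,|\hat v|$ of the Fourier transforms, and replacing $\hat u$ by $|\hat u|$ destroys the compact support in $t$ that Theorem \ref{T3.1} requires (the convolution of absolute values dominates, but is not equal to, the absolute value of the convolution). The way around this is to stay at the level of the $[3,\R\times\R]$-multiplier norm, which is defined through arbitrary $L^2$ test functions and is therefore blind to time support, while the factor $T^{\mu}$ is produced once and for all from the genuinely compactly supported $u,v$ by the same Lemma 4 of \cite{Monlinet} invoked in Theorem \ref{T3.1} — a purely temporal step that commutes with the spatial-frequency splitting described above.
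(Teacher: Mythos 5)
Your proof is correct and is essentially the paper's own argument: the paper obtains Theorem \ref{T3.2} precisely from the displayed triangle inequality combined with the machinery behind Theorem \ref{T3.1} (duality, the multiplier bound of Lemma \ref{L3.2} applied with $\rho=-s_c\in(\frac 12,\min\{\frac{3+2\alpha}4,1\})$, and Lemma 4 of \cite{Monlinet} for the factor $T^{\mu}$), which is exactly the splitting you carry out. Your additional observation --- that the splitting must be performed at the level of the $[3;\R\times\R]$-multiplier norm, since inserting the triangle inequality forces one to pass to $|\hat u|,|\hat v|$ and hence to functions such as $\mathcal{F}^{-1}\bigl(<\xi>^{s-s_c}|\hat u|\bigr)$ that are no longer compactly supported in time, while the factor $T^{\mu}$ is extracted separately via Lemma 4 of \cite{Monlinet} from the genuinely compactly supported data --- is a legitimate point of care that the paper's one-line derivation glosses over, and it resolves that subtlety in the natural way.
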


{\bf The proof of Theorem \ref{T1.1}.}\quad The proof is similar
to that of Theorem 1 in \cite{Monlinet}, we omit it.

\section{Ill-posedness results}

 In this section we give some  ill-posedness results.
\begin{theorem}\label{T5.1}Let $\frac 12\le\alpha\le 1$, $s<-1$ and  $T>0$. Then there does not exist a space
$Y_{T}$ continuously embedded in $C([0,T],{H}^{s}(\R))$ such that
\begin{equation}\parallel W(t)\varphi
\parallel _{Y_{T}}\lesssim \parallel \varphi \parallel _{H^s},\quad \forall \varphi \in {H}^{s}(\R), \label{22}\end{equation}
\begin{equation}\parallel \int _{0}^{t}W(t-t')\partial
_{x}[u^{2}(t')]dt'\parallel _{Y_{T}}\lesssim \parallel u\parallel
_{Y_{T}}^{2}, \quad \forall u\in Y_{T}.
\label{23}\end{equation}\end{theorem}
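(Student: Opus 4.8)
The plan is to establish ill-posedness by contradiction, exhibiting a one-parameter family of initial data on which the bilinear estimate (\ref{23}) forces a growth that is incompatible with the linear estimate (\ref{22}). This is the standard Bourgain/Molinet--Ribaud scheme: if such a space $Y_T$ existed, the Picard iteration map would be real-analytic, and in particular its second derivative at $\varphi=0$ would be bounded; the second iterate is governed precisely by the Duhamel term in (\ref{23}). So I would test (\ref{22})--(\ref{23}) against a suitably normalized high-frequency data $\varphi_\omega$ and show the two estimates are mutually contradictory when $s<-1$.

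First I would choose, for a large frequency parameter $\omega\gg1$, an initial datum of the form $\widehat{\varphi_\omega}(\xi)=\omega^{-s}\bigl(\chi_{I_+}(\xi)+\chi_{I_-}(\xi)\bigr)$, where $I_\pm$ are intervals of unit length centered at $\pm\omega$. The normalization makes $\|\varphi_\omega\|_{H^s}\sim 1$ uniformly in $\omega$. Then (\ref{22}) gives $\|W(t)\varphi_\omega\|_{Y_T}\lesssim 1$, and feeding $u=W(t)\varphi_\omega$ into (\ref{23}) shows
$$\Bigl\|\int_0^tW(t-t')\partial_x\bigl[(W(t')\varphi_\omega)^2\bigr]\,dt'\Bigr\|_{Y_T}\lesssim 1.$$
Since $Y_T\hookrightarrow C([0,T],H^s)$, the left side dominates the $H^s$ norm of the second-iterate term at a fixed time $t\in(0,T]$. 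I would then compute this Duhamel term explicitly on the Fourier side, exploiting the interaction of the two frequency packets $+\omega$ and $-\omega$ so that the output concentrates near frequency $\xi\sim 0$ (from $\omega+(-\omega)$) and near $\xi\sim\pm2\omega$.

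The decisive computation is the low-frequency output near $\xi\approx0$. Writing out the Duhamel integral, the kernel carries the factor
$$\frac{e^{it(\xi^3)}e^{-t|\xi|^{2\alpha}}-e^{it(\xi_1^3+\xi_2^3)}e^{-t(|\xi_1|^{2\alpha}+|\xi_2|^{2\alpha})}}{i(\xi^3-\xi_1^3-\xi_2^3)+(|\xi|^{2\alpha}-|\xi_1|^{2\alpha}-|\xi_2|^{2\alpha})},$$
with $\xi=\xi_1+\xi_2$ and the constraint $\xi_1\approx\omega$, $\xi_2\approx-\omega$. In the resonant region the real part of the denominator is $\sim|\xi_1|^{2\alpha}+|\xi_2|^{2\alpha}\sim\omega^{2\alpha}$, while the numerator is $O(1)$ and the $\partial_x$ contributes a factor $\xi\sim O(1)$. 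Multiplying by the two normalization factors $\omega^{-2s}$ and integrating the unit-length packets, the $H^s$ norm of this low-frequency piece is $\sim \omega^{-2s}\cdot\omega^{-2\alpha}\cdot\langle0\rangle^{s}\sim\omega^{-2s-2\alpha}$. For this to stay bounded as $\omega\to\infty$ we would need $-2s-2\alpha\le0$, i.e. $s\ge-\alpha$; but I expect the sharper contribution to come from the precise $s<-1$ threshold, so the exact bookkeeping of the denominator's size and of where the output frequency sits is what determines the final exponent. I would track the resulting power of $\omega$ carefully and show it is strictly positive for $s<-1$ and $\tfrac12\le\alpha\le1$, contradicting the uniform bound.

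\textbf{The main obstacle} is the accurate estimation of the Duhamel kernel, and in particular establishing a genuine \emph{lower} bound (not merely an upper bound) on the second-iterate norm. Upper bounds follow from Proposition \ref{L2.2}, but for the contradiction I need to show the term is genuinely \emph{large}, which requires identifying a non-cancelling region in $(\xi_1,\xi_2)$-space where the numerator does not vanish and the denominator is as small as $\omega^{2\alpha}$, then proving the contributions there do not destructively interfere. The role of the hypothesis $\tfrac12\le\alpha\le1$ is precisely to guarantee that the dissipative real part $|\xi|^{2\alpha}$ dominates the dispersive imaginary part in the relevant regime, pinning the denominator size at $\omega^{2\alpha}$; for $\alpha<\tfrac12$ the dispersive term would interfere and the argument would break, which is consistent with the theorem's stated range. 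Once the lower bound $\gtrsim\omega^{-2s-2\alpha}$ (or the correctly recomputed exponent exceeding the $s=-1$ threshold) is secured, choosing $\omega$ large yields the contradiction and completes the proof.
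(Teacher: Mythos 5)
Your overall scheme is exactly the paper's (contradiction via the second Picard iterate, data $\hat\varphi$ concentrated as two unit-normalized bumps at $\pm\omega$, low-frequency output from the $+\omega/-\omega$ interaction), but the decisive quantitative step is wrong, and the lower bound you flag as "the main obstacle" is never carried out. On the denominator: with $\xi_1\approx\omega$, $\xi_2\approx-\omega$ and output $\xi=\xi_1+\xi_2\in[-\tfrac12,\tfrac12]$, the dispersive part is $\xi_1^3+\xi_2^3-\xi^3=-3\xi\xi_1\xi_2\approx 3\xi\omega^2$, which for $|\xi|\sim 1$ has size $\sim\omega^2$ and dominates the dissipative part $\sim\omega^{2\alpha}$ for every $\alpha\le 1$; dissipation "pins" the denominator only on the tiny set $|\xi|\lesssim\omega^{2\alpha-2}$, where the factor $|\xi|$ from $\partial_x$ and the measure of the set are correspondingly small. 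So the denominator is \emph{not} of size $\omega^{2\alpha}$ on the bulk of the output region; the usable uniform bound is $|{\rm denominator}|\lesssim \omega^{2\alpha}+\omega^2\lesssim\omega^2$, and the resulting growth of the second iterate is $\gtrsim\omega^{-2s-2}$, which blows up precisely for $s<-1$. Your exponent $\omega^{-2s-2\alpha}$ would give ill-posedness for all $s<-\alpha$, which for $\alpha<1$ contradicts the well-posedness of Theorem \ref{T1.1} on $(-1,-\alpha)$ --- a sign the bookkeeping cannot be repaired in the direction you suggest. For the same reason your reading of the hypothesis $\tfrac12\le\alpha\le1$ is off: it is not what makes the denominator dissipative (it never is, at $|\xi|\sim1$); it is only what makes the threshold $s<-1$ sharp, since $-\min\{\tfrac{3+2\alpha}4,1\}=-1$ exactly when $\alpha\ge\tfrac12$. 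The contradiction argument itself works for all $\alpha\in[0,1]$.

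The second gap is the non-cancellation lower bound, which you correctly identify as essential but leave as a to-do. The paper settles it with a pointwise sign argument that costs one line: for fixed $t>0$, on the interaction set $K_\xi$ (which has measure $\gtrsim1$) one has $|\xi_1|,|\xi-\xi_1|\ge N$, so the oscillating term in the numerator is crushed by the strong dissipation, $\mathrm{Re}\bigl(e^{-(|\xi_1|^{2\alpha}+|\xi-\xi_1|^{2\alpha})t}e^{i(\xi_1^3+(\xi-\xi_1)^3-\xi^3)t}\bigr)\le e^{-2N^{2\alpha}t}$, while the output dissipation is mild, $e^{-|\xi|^{2\alpha}t}\ge e^{-(1/2)^{2\alpha}t}$; hence the integrand has real part bounded below by $e^{-(1/2)^{2\alpha}t}-e^{-2(N+2)^{2\alpha}t}>0$ uniformly on $K_\xi$, and no destructive interference can occur under integration. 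Combining this with the corrected denominator bound gives $\|u_{2,N}(t)\|_{H^s}\gtrsim N^{-2s-2}\bigl(e^{-(1/2)^{2\alpha}t}-e^{-2(N+2)^{2\alpha}t}\bigr)$, which contradicts the uniform bound coming from (\ref{22})--(\ref{23}) once $s<-1$ and $N$ is large. Until you replace the claim "denominator $\sim\omega^{2\alpha}$" by this estimate and supply the pointwise sign argument, the proposal does not prove the theorem.
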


\begin{proof} Suppose that there exists a space $Y_{T}$ such that (\ref{22}) and
(\ref{23}) hold. For any $t\in [0,T]$, taking $u=W(t)\varphi $ and
since $Y_{T}$ is continuously embedded in $C([0,T],{H}^{s}(\R))$,
we get
\begin{equation}\parallel \int _{0}^{t}W(t-t')\partial
_{x}[(W(t')\varphi )^{2}]dt'\parallel _{{H}^{s}}\lesssim
\parallel \int _{0}^{t}W(t-t')\partial _{x}[(W(t')\varphi
)^{2}]dt'\parallel _{Y_{T}}\lesssim \parallel \varphi \parallel
_{{H}^{s}}^{2}. \label{24}\end{equation} We show now that
(\ref{24}) fails by choosing an appropriate sequence $\{\varphi
_{N}\}$. Let $\{\varphi _{N}\}$ be the real-valued function
defined through its Fourier transform by \[\hat{\varphi
}_{N}=N^{-s}[\chi_{I_{N}}(\xi )+\chi_{-I_{N}}(\xi )],\] where
$I_{N}=[N,N+2]$, so $\varphi _{N}\in {\cal S}$. Note that
$\parallel \varphi \parallel _{{H}^{s}}\sim 1$, setting
\begin{eqnarray*}u_{1,N}(t,x)=W(t)\varphi_N ,\qquad
u_{2,N}(t,x)=\int _{0}^{t}W(t-t')\partial _{x}[(W(t')\varphi
)^{2}]dt'.\end{eqnarray*} and taking $x$-Fourier transform , we
will get
\[\mathcal{F}_{x}(u_{2,N}(t,\cdot ))(\xi )=\int
_{0}^{t}e^{-(t-t')\mid \xi \mid ^{2\alpha }}e^{i(t-t')\xi
^{3}}(i\xi )
[\mathcal{F}_{x}(u_{1,N}(t'))*\mathcal{F}_{x}(u_{1,N}(t'))](\xi
)dt',\] where
\begin{eqnarray*}&{}&[\mathcal{F}_{x}(u_{1,N}(t'))*\mathcal{F}_{x}(u_{1,N}(t'))](\xi
)= [\mathcal{F}_{x}(W(t')\varphi
_{N})*\mathcal{F}_{x}(W(t')\varphi _{N})](\xi
)\\
&=&\int_{\R} \hat{\varphi }_{N}(\xi _{1})\hat{\varphi }_{N}(\xi
-\xi _{1})e^{-(\mid \xi _{1}\mid ^{2\alpha } +\mid \xi -\xi
_{1}\mid ^{2\alpha })t'}e^{i(\xi _{1}^{3}+(\xi -\xi
_{1})^{3})t'}d\xi _{1}.\end{eqnarray*} Hence
\begin{eqnarray*}&{}&\mathcal{F}_{x}(u_{2,N}(t,\cdot ))(\xi
)=e^{-t\mid \xi \mid ^{2\alpha }}e^{it\xi ^{3}}(i\xi )\int_{\R}
\hat{\varphi }_{N}(\xi _{1})\hat{\varphi }_{N}(\xi -\xi
_{1})\\
&{}&\qquad \times\frac{e^{-(\mid \xi _{1}\mid ^{2\alpha } +\mid
\xi -\xi _{1}\mid ^{2\alpha }-\mid \xi \mid ^{2\alpha })t}e^{i(\xi
_{1}^{3}+(\xi -\xi _{1})^{3}-\xi ^{3})t}-1}{-(\mid \xi _{1}\mid
^{2\alpha } +\mid \xi -\xi _{1}\mid ^{2\alpha }-\mid \xi \mid
^{2\alpha })+i(\xi _{1}^{3}+(\xi -\xi _{1})^{3}-\xi ^{3})} d\xi
_{1},\end{eqnarray*}
\begin{eqnarray*}&{}&\parallel u_{2,N}(t)\parallel
_{{H}^{s}}^{2}\geq \int _{-\frac 1{2}}^{\frac 1{2}}< \xi >^{2s}
\mid \mathcal{F}
_{x}(u_{2,N}(t,\cdot ))(\xi )\mid ^{2}d\xi \\
&=&N^{-4s}\int _{-\frac{1}{2}}^{\frac{1 }{2}}\left | \int _{K_{\xi
}}\frac{e^{-(\mid \xi _{1}\mid ^{2\alpha } +\mid \xi -\xi _{1}\mid
^{2\alpha })t}e^{i(\xi _{1}^{3}+(\xi -\xi _{1})^{3}-\xi
^{3})t}-e^{-\mid \xi \mid ^{2\alpha }t}}{-(\mid \xi _{1}\mid
^{2\alpha } +\mid \xi -\xi _{1}\mid ^{2\alpha }-\mid \xi \mid
^{2\alpha })+i(\xi _{1}^{3}+(\xi -\xi _{1})^{3}-\xi ^{3})}d\xi
_{1}\right |^{2}\\
&{}&\qquad\qquad\times <\xi>^{2s} \mid \xi \mid ^{2} d\xi,
\end{eqnarray*}
where $$K_{\xi }=\{\xi _{1}\mid \xi -\xi _{1}\in I_{N},\xi _{1}\in
-I_{N}\}\cup \{\xi _{1}\mid \xi -\xi _{1}\in -I_{N},\xi _{1}\in
I_{N}\}.$$ Note that for any $\xi \in [-\frac{1 }{2},\frac{1
}{2}]$. One has $mes(K_{\xi })\gtrsim 1 $ and
$$\mid \xi _{1}\mid
^{2\alpha } +\mid \xi -\xi _{1}\mid ^{2\alpha }-\mid \xi \mid
^{2\alpha }\sim N^{2\alpha },\qquad \xi _{1}^{3}+(\xi -\xi
_{1})^{3}-\xi ^{3}=3\xi\xi_1(\xi-\xi_1)\sim N^{2}. $$
We have
\[ e^{-\mid \xi \mid ^{2\alpha }t}- Re(e^{-(\mid \xi _{1}\mid ^{2\alpha } +\mid \xi -\xi _{1}\mid
^{2\alpha })t}e^{i(\xi _{1}^{3}+(\xi -\xi _{1})^{3}-\xi ^{3})t})
\geq e^{-(1 /2)^{2\alpha }t}-e^{-2(N+2)^{2\alpha }t},\] which
leads to
\[\left | \int _{K_{\xi }}\frac{e^{-(\mid \xi _{1}\mid ^{2\alpha }
+\mid \xi -\xi _{1}\mid ^{2\alpha })t}e^{i(\xi _{1}^{3}+(\xi -\xi
_{1})^{3}-\xi ^{3})t}-e^{-\mid \xi \mid ^{2\alpha }t}}{-(\mid \xi
_{1}\mid ^{2\alpha } +\mid \xi -\xi _{1}\mid ^{2\alpha }-\mid \xi
\mid ^{2\alpha })+i(\xi _{1}^{3}+(\xi -\xi _{1})^{3}-\xi
^{3})}d\xi _{1}\right | \geq  \frac{e^{-(1 /2)^{2\alpha
}t}-e^{-2(N+2)^{2\alpha }t}}{N^{2\alpha }+N^{2}}.\] Thus
\begin{equation}\parallel u_{2,N}(t)\parallel _{{H}^{s}}^{2}\geq
N^{-4s}\left (\frac{e^{-(1 /2)^{2\alpha }t}-e^{-2(N+2)^{2\alpha
}t}}{N^{2\alpha }+N^2}\right )^2\geq N^{-4s-4}\left (e^{-(1
/2)^{2\alpha }t}-e^{-2(N+2)^{2\alpha }t}\right )^2
.\label{25}\end{equation}(\ref{25}) contradicts (\ref{24}) when
$N$ is large enough.\end{proof}
\begin{theorem}\label{T4.2} Let $\frac 12\le \alpha\le 1$ and $s<-1$.
Then there does not exists any $T$ such that (\ref{1}) admits a
unique local solution defined on the interval $[0, T]$ and such
that the flow-map $$\varphi \mapsto u(t),\, t\in [0, T]$$ is $C^2$
differentiable at zero from $H^s(\R)$ to $C([0, T];
H^s(\R))$.\end{theorem}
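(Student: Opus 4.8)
The plan is to prove Theorem~\ref{T4.2} by reduction to Theorem~\ref{T5.1}. The strategy is the standard Molinet--Ribaud argument: if a $C^2$ flow-map existed, then differentiating it twice at the origin would produce a bounded bilinear operator, and this boundedness would force exactly the Duhamel estimate~(\ref{23}) that Theorem~\ref{T5.1} has just shown to be impossible for $\frac12\le\alpha\le1$ and $s<-1$. So the whole proof is a contradiction argument that extracts~(\ref{23}) from $C^2$ differentiability.

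First I would argue by contradiction: assume there is a $T>0$ and a unique local solution with flow-map $\varphi\mapsto u(t)$ that is $C^2$ differentiable at $\varphi=0$ from $H^s(\R)$ into $C([0,T];H^s(\R))$. Consider the scaled data $\varphi\mapsto \gamma\varphi$ and write $u(t,\gamma\varphi)$ for the corresponding solution. Since $u(t,0)\equiv 0$, the first derivative $\frac{d}{d\gamma}u(t,\gamma\varphi)\big|_{\gamma=0}$ solves the linearized (free) equation with data $\varphi$, so it equals $W(t)\varphi$. This is where the linear estimate~(\ref{22}) comes from: it is automatic from continuity of the flow and boundedness of $W(t)$ on $H^s$.

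Next I would compute the second derivative at zero. Differentiating the integral (Duhamel) formulation of~(\ref{1}) twice in $\gamma$ and setting $\gamma=0$ kills the cubic-and-higher terms and leaves
$$\frac{d^2}{d\gamma^2}u(t,\gamma\varphi)\Big|_{\gamma=0}=-2\int_0^t W(t-t')\,\partial_x\big[(W(t')\varphi)^2\big]\,dt'.$$
The hypothesis that the flow-map is $C^2$ at zero means precisely that this second derivative is a bounded symmetric bilinear map from $H^s(\R)\times H^s(\R)$ into $C([0,T];H^s(\R))$; evaluating on the diagonal $\varphi=\varphi$ gives a bound of the form
$$\Big\|\int_0^t W(t-t')\,\partial_x\big[(W(t')\varphi)^2\big]\,dt'\Big\|_{C([0,T];H^s)}\lesssim \|\varphi\|_{H^s}^2.$$
This is exactly inequality~(\ref{24}), the estimate that the proof of Theorem~\ref{T5.1} explicitly contradicts via the sequence $\{\varphi_N\}$ with $\hat\varphi_N=N^{-s}[\chi_{I_N}+\chi_{-I_N}]$.

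I would then simply invoke the conclusion of Theorem~\ref{T5.1}: the second-derivative bound just derived coincides with~(\ref{24}) (equivalently, setting $Y_T=C([0,T];H^s)$, both~(\ref{22}) and~(\ref{23}) would hold), and we showed in~(\ref{25}) that this bound must fail as $N\to\infty$ because the right-hand side of~(\ref{25}) grows like $N^{-4s-4}\to\infty$ when $s<-1$, while the left-hand side is forced to stay bounded. This contradiction shows no such $C^2$ differentiable flow-map can exist. The one point demanding care—and the main obstacle—is justifying that $C^2$ differentiability of the flow genuinely produces~(\ref{24}): one must legitimately interchange differentiation in $\gamma$ with the $t'$-integral in the Duhamel formula and confirm that the quadratic term of the Taylor expansion is the stated bilinear object, rather than merely assume it. Once that identification is clean, the contradiction with the lower bound~(\ref{25}) is immediate.
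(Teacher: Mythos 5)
Your proposal is correct and follows essentially the same route as the paper's own proof: differentiate the Duhamel formulation twice at zero to identify the second derivative of the flow-map with the bilinear operator $\int_0^t W(t-t')\,\partial_x\bigl[(W(t')\varphi)^2\bigr]\,dt'$, then invoke the failure of estimate (\ref{24}), established in the proof of Theorem \ref{T5.1} via the sequence $\{\varphi_N\}$ and the lower bound (\ref{25}). The only discrepancy is the harmless multiplicative constant in the second derivative (your $-2$ versus the correct $-1$ coming from the $-\tfrac12$ in the Duhamel formula), which is immaterial to the contradiction.
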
 \begin{proof} Let $u$ be a solution of
(\ref{1}). Then we have
\[u(t,x,\varphi )=W(t)\phi -\frac{1}{2}\int _{0}^{t}W(t-t')\partial _{x}(u(t', x,\phi )^{2})dt'.\]
Assume now that the flow-map is $C^{2}$. Since $u(t,x,0)\equiv 0$,
we have \[u_{1}(t,x):=\frac{\partial u}{\partial \phi
}(t,x,0)[h]=W(t)h,\]
\begin{eqnarray*}u_{2}(t,x):&=&\frac{\partial ^{2}u}{\partial ^{2}\phi }(t,x,0)[h,h]=\int _{0}^{t}W(t-t')\partial _{x}(u_{1}(t',x))^{2}dt'\\
&=&\int _{0}^{t}W(t-t')\partial
_{x}(W(t')h)^{2}dt'.\end{eqnarray*} Since the flow-map is $C^{2}$
one must have \[\parallel u_{2}(t)\parallel _{{H}^{s}}\leq
\parallel h\parallel _{{H}^{s}}^{2},\quad \forall h\in {H}^{s}(\R).\]
But this is exactly the estimate which has been shown to fail in
the proof of Theorem \ref{T5.1}.\end{proof}


\begin{thebibliography}{99}


\bibitem{Ott} E. Ott and N. Sudan, Damping of solitary waves, Phys. Fluids, 13(6)(1970), 1432-1434.

\bibitem{Bona} J. L. Bona and L. Luo, Decay of solutions to nonlinear, dispersive wave equations, Philos. Trans. Royal Soc.
London A, 302(1981), 457-510.


\bibitem{Kenig1} C. E. Kenig, G. Ponce and L. Vega, A bilinear estimate with applications to the KdV equations,
J. Amer. Math. Soc., 9(2)(1996), 573-603.

\bibitem{Kenig2} C. E. Kenig, G. Ponce and L. Vega, On the ill-posedness of some canonical dispersive equations,
 Duke Math. J., 106(2001), 617-633.

\bibitem{BCS2} N. Tzvetkov, Remark on the ill-posedness for KdV equation, C. R. Acad. Sci. Paris,J.329(12)(1999),
1043-1047.

\bibitem{Dix} D. B. Dix, Nonuniqueness and uniqueness in the initial-value problem for  Burger's
equation, SIAM J. Math. Anal., 279(3)(1996), 708-724.


\bibitem{Monlinet} L. Monlinet and F. Ribaud, On the low regularity of the Korteweg-de Vries-Burgers equation, International
Mathematics Reserch Notices J.37(2002),1979-2005.

\bibitem{BCS4} J.Ginibre, Y.Tsutsumi and G.Velo, The local Cauchy
problem for the Zakharov system,Funct Analysis
,J.151(2)(1997),384-436.


\bibitem{Bek} D. Bekiranov, The initial-value problem for the generalized Burger's equation,
Diff. Int. Eq., 9(6)(1996), 1253-1265.


\bibitem{Tao} T. Tao, Multilinear weighted convolution of $L^2$ functions, and applications to non-linear
dispersive equations, Amer. J. Math., 123(5)(2001), 839-908.



\end{thebibliography}
\end{document}